\newcommand{\FF}{\mathbb{F}}
\newcommand{\ds}{\displaystyle}
\newlength{\NT}
\newtheorem{thrm}{Theorem\hskip 1mm}[section]
\begin{document}

\title[]{New Extremal Binary Self-dual Codes from block circulant matrices and block quadratic residue circulant matrices}

\author{J Gildea, A Kaya, R Taylor, A Tylyshchak, B Yildiz}

\address{Department of Mathematics\\
Faculty of Science and Engineering\\
University of Chester\\
England}
\email{j.gildea@chester.ac.uk}

\address{Department of Mathematics Education, Sampoerna University\\
12780, Jakarta, Indonesia}
\email{nabidin@gmail.com}

\address{Department of Mathematics\\
Faculty of Science and Engineering\\
University of Chester\\
England}
\email{rhian.taylor@chester.ac.uk}

\address{Department of Algebra\\
Uzhgorod National University\\
Uzhgorod, Ukraine}
\email{alxtlk@bigmir.net}

\address{Department of Mathematics \& Statistics\\
Northern Arizona University  \\
Flagstaff, AZ 86001 \\
USA}
\email{Bahattin.Yildiz​@nau.edu}

\keywords{self-dual codes, codes over rings, quadratic double circulant codes}
\subjclass{94B05,15B33}

\begin{abstract}
In this paper, we construct self-dual codes from a construction that involves
both block circulant matrices and block quadratic residue circulant matrices. We provide
conditions when this construction can yield self-dual codes. We construct
self-dual codes of various lengths over $\FF_2$ and $\FF_2+u\FF_2$. Using extensions, neighbours
and sequences of neighbours, we construct many new self-dual codes. In particular, we construct
one new self-dual code of length $66$ and $51$ new self-dual codes of length $68$.
\end{abstract}

\maketitle

\section{Introduction}

Self-dual codes are a class of linear block codes that have been extensively studied in recent history. One of the most famous and extensively used constructions, used to construct self-dual codes, is the double circulant construction. It involves considering a generator matrix of the form $(I|A)$ where $A$ is a circulant matrix. In 2002, Gaborit (\cite{gaborit}) introduced the notion of a quadratic residue circulant matrix. Let $R$ be a finite commutative Frobenius ring of characteristic $2$ and $p$ be prime. Let $\gamma_i \in R$, $A$ be a $p \times p$ circulant matrix, $Q_r(a,b,c)$ be the $p \times p$ circulant matrix with three free variables, obtained through the quadratic residues and non-residues modulo $p$. Thus, the first row of $\overline{r} = (r_0,r_1, \dots, r_{p-1})$ of $Q_p(a,b,c)$ is determined by the following rule:
\[
\begin{split}
r_i=
\begin{cases}
a & \text{if}\;i=0\\
b & \text{if}\;i \; \text{is a quadratic residue modulo} \; p\\
c & \text{if}\;i \; \text{is a quadratic non-residue modulo} \; p.\\
\end{cases}\\[0.1in]
\end{split}
\]

In \cite{gaborit}, Gaborit considered constructing self-dual codes from generator matrices of the form  $(I|Q_p(a,b,c))$ and

$$\left(
\begin{array}{c|ccc|c|ccc}
\gamma_1   & \gamma_2 & \cdots & \gamma_2  & \gamma_3 &  \gamma_4 & \cdots  &  \gamma_4 \\ \hline
\gamma_2   &  &  &  & \gamma_4 &   &   &   \\
\vdots   &  & I &  & \vdots &   & Q_p(a,b,c)  &   \\
\gamma_2   &  &  &  & \gamma_4 &   &   &  \\
\end{array}  \right).$$

\noindent In \cite{QR}, these techniques were extended to constructing self-dual codes from generator matrices of the form
$(Q_p(a,b,c)|A)$ and
$$\left(
\begin{array}{c|ccc|c|ccc}
\gamma_1   & \gamma_2 & \cdots & \gamma_2  & \gamma_3 &  \gamma_4 & \cdots  &  \gamma_4 \\ \hline
\gamma_2   &  &  &  & \gamma_4 &   &   &   \\
\vdots   &  & Q_p(a,b,c) &  & \vdots &   & A  &   \\
\gamma_2   &  &  &  & \gamma_4 &   &   &  \\
\end{array}  \right),$$
where $A$ is a $p \times p$ circulant matrix. In this article we consider constructing self-dual codes from generator matrices of the
form

\[
\left(
\begin{array}{ccc|ccc}
Q_0& Q_1 & Q_2 & A_0 & A_1 & A_2\\
Q_2  & Q_0& Q_1  &A_2 &  A_0 & A_1 \\
Q_1  & Q_2  & Q_0 &A_1 & A_2 &  A_0   \\
\end{array}
\right)
\]

where $Q_i$ are quadratic residue circulant matrices and $A_i$ are $p \times p$ circulant matrices.\\

 Section 2 of this article contains a brief introduction to
self-dual codes. We discuss some important properties of quadratic residue circulant matrices in section 3.  In section 4, we describe the construction itself. We provide theoretical results that establish certain conditions when this construction yields self-dual codes. In section 5,
we apply the construction to find many known and unknown self-dual codes that had not been previously constructed. We conclude with listing the newly constructed codes and a suggestion for future work.

\section{Preliminaries}

Throughout this paper, $R$ will denote a commutative Frobenius ring of characteristic 2. A
code $C$ of length $n$ over $R$ is an $R$-submodule of $R^{n}$. Elements of the code $C$ are called codewords of $C$. Let
$x=\left( x_{1},x_{2},\ldots ,x_{n}\right)\in R^n$ and $y=\left( y_{1},y_{2},\ldots
,y_{n}\right)\in R^n $. Define the Euclidean inner product between $x$ and $y$ as $\left\langle
x,y\right\rangle _{E}=\sum x_{i}y_{i}$. The dual $C^{\bot }$ of the code $C$
is defined as
\begin{equation*}
C^{\bot }=\left\{ x\in R^{n} \mid \left\langle x,y\right\rangle
_{E}=0\text{ for all }y\in C\right\} .
\end{equation*}%

 If $C=C^{\bot }$, we say that $C$ is self-dual. For binary codes, a self-dual code where all weights are congruent to $0 \pmod{4}$ is said to be Type~II and a self-dual binary code is said to be Type~I otherwise. The bounds on the minimum distances for self-dual codes are given in \cite{Rains} and are as follows:

\begin{thrm}
$($\cite{Rains}$)$ Let $d_{I}(n)$ and $d_{II}(n)$ be the minimum distances of
a Type I and Type II binary code of length $n$, respectively. Then
\begin{equation*}
d_{II}(n)\leq 4\lfloor \frac{n}{24}\rfloor +4
\end{equation*}%
and
\begin{equation*}
d_{I}(n)\leq \left\{
\begin{array}{ll}
4\lfloor \frac{n}{24}\rfloor +4 & \text{if $n\not\equiv 22\pmod{24}$} \\
4\lfloor \frac{n}{24}\rfloor +6 & \text{if $n\equiv 22\pmod{24}$.}%
\end{array}%
\right.
\end{equation*}
\end{thrm}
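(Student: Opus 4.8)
We outline the standard proof; since the paper only uses the statement as cited above, we treat it as an external fact, and what follows is merely a plan. The plan is to pass from the code to its Hamming weight enumerator and to exploit the invariance that self-duality imposes on it.

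Let $C$ be a self-dual binary code of length $n$ with weight enumerator $W_C(x,y)=\sum_i A_i\,x^{n-i}y^i$, so that $A_i\in\ZZ_{\ge 0}$ and $A_0=1$. By the MacWilliams identity, self-duality forces $W_C$ to be invariant under $(x,y)\mapsto\bigl(\tfrac{x+y}{\sqrt2},\tfrac{x-y}{\sqrt2}\bigr)$; since every codeword of a binary self-dual code has even weight, $W_C$ is also invariant under $y\mapsto -y$, and when $C$ is Type~II it is moreover invariant under $y\mapsto\zeta y$ for $\zeta$ a primitive fourth root of unity. Hence $W_C$ lies in the ring of invariants of a finite subgroup of $\mathrm{GL}_2(\CC)$, and by Gleason's theorem that ring is polynomial: it is $\CC[\phi_2,\phi_8]$ with $\phi_2=x^2+y^2$, $\phi_8=x^2y^2(x^2-y^2)^2$ in the Type~I case, and $\CC[\psi_8,\psi_{24}]$ with $\psi_8=x^8+14x^4y^4+y^8$, $\psi_{24}=x^4y^4(x^4-y^4)^4$ in the Type~II case. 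Everything else is bookkeeping inside these rings.

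For Type~II ($8\mid n$) I would argue as follows. The degree-$n$ component of $\CC[\psi_8,\psi_{24}]$ is spanned by the $\lfloor n/24\rfloor+1$ monomials $\psi_8^{\,(n-24k)/8}\psi_{24}^{\,k}$, $0\le k\le\lfloor n/24\rfloor$; since $\psi_{24}^{\,k}$ is divisible by $(xy)^{4k}$, the $k$-th of these has lowest $y$-degree exactly $4k$ with leading coefficient $1$, so the linear map sending a degree-$n$ invariant to $(A_0,A_4,\dots,A_{4\lfloor n/24\rfloor})$ is lower-triangular with unit diagonal, hence invertible. Thus there is a \emph{unique} invariant $W_{\mathrm{ext}}$ with $A_0=1$ and $A_4=\cdots=A_{4\lfloor n/24\rfloor}=0$. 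If some Type~II code had $d>4\lfloor n/24\rfloor+4$, its weight enumerator would necessarily be $W_{\mathrm{ext}}$ and would in addition satisfy $A_{4\lfloor n/24\rfloor+4}=0$; so the whole theorem reduces, in this case, to showing that this coefficient of $W_{\mathrm{ext}}$ is nonzero — indeed positive — which one obtains by extracting it from the rational generating function that assembles the extremal weight enumerators of all lengths, via a residue (contour-integral) estimate. This gives $d_{II}(n)\le 4\lfloor n/24\rfloor+4$. The same triangular argument in $\CC[\phi_2,\phi_8]$ yields only the weaker $d_I(n)\le 2\lfloor n/8\rfloor+2$, so for Type~I I would bring in the shadow. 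Let $C_0\subseteq C$ be the doubly-even subcode (of index $2$) and $S=C_0^\perp\setminus C$ the shadow; then $|S|=|C|$, $W_{C_0^\perp}=W_C+W_S$, and applying MacWilliams to $C_0$ and to $C_0^\perp$ expresses $W_S$ as an explicit linear substitution applied to $W_C$. Consequently $W_S$ has non-negative integer coefficients, all of its exponents lie in a single residue class modulo $4$ determined by $n\bmod 8$, and its least exponent is bounded below. Imposing $A_1=\cdots=A_{d-1}=0$ on $W_C$ and then demanding that the forced coefficients of $W_S$ be non-negative pins down $d$ to be at most $4\lfloor n/24\rfloor+4$ in every residue $n\bmod 24$ except $n\equiv 22$, where the parity constraints on $W_S$ leave exactly enough slack for $d=4\lfloor n/24\rfloor+6$.

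The hard part is not the invariant-theoretic set-up — existence and uniqueness of $W_{\mathrm{ext}}$, and of its shadow-refined analogue in the Type~I case, is pure triangular linear algebra — but the two positivity facts on which the bounds actually turn. First, one must show that the critical coefficient $A_{4\lfloor n/24\rfloor+4}$ of $W_{\mathrm{ext}}$ does not vanish; this requires a genuine estimate on the Taylor coefficients of a rational function, not a formal manipulation, and is the content of the Mallows--Sloane analysis. Second, in the Type~I case one must simultaneously control the modulo-$4$ residue class and the minimum weight of the shadow against the extremal conditions on $W_C$, and it is precisely this bookkeeping that produces the exceptional value $4\lfloor n/24\rfloor+6$ when $n\equiv 22\pmod{24}$. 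We do not carry these computations out here; they are exactly what \cite{Rains} supplies.
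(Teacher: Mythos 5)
The paper gives no proof of this theorem — it is quoted verbatim from the cited reference of Rains — so there is no internal argument to compare against, and your outline is an accurate account of how the result is actually established there: Gleason's theorem plus the triangularity/positivity argument for the Type~II bound, and the shadow constraints on $W_S$ for the Type~I bound with the exceptional case $n\equiv 22\pmod{24}$. You correctly locate the two genuinely nontrivial steps (positivity of the coefficient $A_{4\lfloor n/24\rfloor+4}$ of the extremal enumerator, and the shadow bookkeeping) and defer them to the reference, which is precisely what the paper itself does.
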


Self-dual codes that meet these bounds are called \textit{extremal}.\\

 Although, the theoretical result in this article is based around commutative Frobenius rings of characteristic 2,
 all the computational results are based on the rings $\FF_2$ and $\FF_2+u\FF_2$. Now, $\FF_2+u\FF_2 := \FF_2 [X]/(X^2)$, where $u$ satisfies $u^2=0$. Thus, the elements of the ring are $0,1,u$ and $1+u$, where $1$ and $1+u$ are the units of $\FF_2+u\FF_2$. We also define the Gray map $\phi$ from $\FF_2+u\FF_2$ to $\FF_2^2$ given by $\phi(a+bu)=(b,a+b)$ where $a,b \in \FF_2$.\\

  The next result, introduced in \cite{Kim}, will be implemented throughout this article.\\

\begin{thrm}\label{extension}
Let $C$ be a binary self-dual code of length $2n$, $G=(r_{i})$ be an $n\times 2n$ generator matrix for $C$,
where $r_{i}$ is the $i$-th row of $G$, $1\leq i\leq n$. Let $X$ be a vector in $\FF_2^{2n}$ with $\left\langle
X,X\right\rangle =1$. Let $y_{i}=\left\langle r_{i},X\right\rangle $ for $%
1\leq i\leq n$. Then the following matrix%
\begin{equation*}
\left[
\begin{array}{cc|c}
1 & 0 & X \\ \hline
y_{1} & y_{1} & r_{1} \\
\vdots  & \vdots  & \vdots  \\
y_{n} & y_{n} & r_{n}%
\end{array}%
\right] ,
\end{equation*}%
generates a binary self-dual code of length $2n+2$.
\end{thrm}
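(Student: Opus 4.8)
The plan is to show directly that the $(n+1)\times(2n+2)$ matrix displayed above --- call it $G'$, with top row $r_0:=(1,0,X)$ and remaining rows $r_i':=(y_i,y_i,r_i)$ for $1\le i\le n$ --- generates a code $C'$ that is both self-orthogonal and of dimension $n+1$. Since $n+1=\tfrac12(2n+2)$, these two facts together force $C'=(C')^{\bot}$, which is what we want.

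First I would verify self-orthogonality by checking that every pair of rows of $G'$, including each row with itself, is orthogonal under the Euclidean inner product over $\FF_2$. For the top row with itself, $\langle r_0,r_0\rangle = 1+0+\langle X,X\rangle = 1+1 = 0$. For the top row against $r_i'$, $\langle r_0,r_i'\rangle = y_i+0+\langle X,r_i\rangle = y_i+y_i = 0$ by the definition $y_i=\langle r_i,X\rangle$. For two lower rows $r_i'$ and $r_j'$ (allowing $i=j$), $\langle r_i',r_j'\rangle = y_iy_j+y_iy_j+\langle r_i,r_j\rangle = \langle r_i,r_j\rangle$, which vanishes because $C$ is self-dual and hence self-orthogonal (in particular $\langle r_i,r_i\rangle=0$). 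Thus $C'\subseteq (C')^{\bot}$.

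Next I would check that the rows of $G'$ are linearly independent, so that $\dim C'=n+1$. Since $C$ is self-dual of length $2n$ it has dimension $n$, so $r_1,\dots,r_n$ are already independent. Given a dependence $\alpha_0 r_0+\sum_{i=1}^n\alpha_i r_i'=0$ over $\FF_2$, comparing the first two coordinates gives $\alpha_0+\sum\alpha_i y_i=0$ and $\sum\alpha_i y_i=0$, whence $\alpha_0=0$; the remaining $2n$ coordinates then read $\sum\alpha_i r_i=0$, forcing all $\alpha_i=0$. Combining this with the previous paragraph, $C'\subseteq(C')^{\bot}$ and $\dim C'=n+1=2n+2-\dim C'=\dim(C')^{\bot}$, so $C'=(C')^{\bot}$ and $C'$ is a binary self-dual code of length $2n+2$.

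The computations are entirely routine; the only thing to keep straight is the role of the two prepended columns, namely the constant column $(1,0)$ in the new row and the column $(0,y_1,\dots,y_n)^{\mathsf T}$. They are engineered precisely so that the new generator's self-inner-product picks up the $+1$ that cancels $\langle X,X\rangle=1$, the cross terms with the old rows cancel the $\langle X,r_i\rangle$ contributions, and the first coordinate isolates $\alpha_0$ in the independence argument. So there is no genuine obstacle beyond this bookkeeping, and the hypotheses $\langle X,X\rangle=1$ and $\operatorname{char}R=2$ are exactly what make every cancellation work.
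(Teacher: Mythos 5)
Your proof is correct and complete: the row-by-row orthogonality check (using $\langle X,X\rangle=1$ to kill the top row's self-product and $y_i=\langle r_i,X\rangle$ to kill the cross terms), together with the rank argument isolating $\alpha_0$ from the first two coordinates, is exactly the standard argument for this extension method. Note that the paper itself offers no proof --- it simply quotes the result from Kim's paper \cite{Kim} --- so there is nothing in the text to compare against; your write-up supplies the justification the paper omits, and it matches the argument in the cited source.
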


\bigskip

Two self-dual binary codes of dimension $k$ are said to be neighbours
if their intersection has dimension $k-1$. Let $C$ be a self-dual
code. Let $x\in {\mathbb{F}}_{2}^{n}-C$ then $D=\left\langle \left\langle x\right\rangle ^{\bot }\cap
C,x\right\rangle $ is a neighbour of $C$. Let $x_0 \in \FF_{2}^{2n}-\mathcal{N}_{(0)}$. In \cite{GNP}, the following formula for constructing the $k$-range neighbour codes was provided:

\[
\mathcal{N}_{(i+1)}=\left\langle \left\langle x_i \right\rangle ^{\bot }\cap \mathcal{N}_{(i)},x_i\right\rangle
\]

\noindent where $\mathcal{N}_{(i+1)}$ is the  neighbour of $\mathcal{N}_{(i)}$ and $x_i \in \FF_{2}^{2n}-\mathcal{N}_{(i)}$.\\

\bigskip

\section{Quadratic Residue Circulant Matrices}

\noindent Let $Q_p(a_i,b_i,c_i)$ be the $i^{th}$-$p \times p$ quadratic circulant matrix, where $a_i,b_i,c_i \in R$ and $p$ is a prime number and $0 \leq i \leq 2$. For the purposes
of this article, we need to evaluate $ Q_p(a_i,b_i,c_i)Q_p(a_j,b_j,c_j)^T$. From \cite{gaborit},
we can clearly see that $ Q_p(a_i,b_i,c_i)Q_p(a_i,b_i,c_i)^T$
\[=\begin{cases}
Q_p(a_i^2,b_i^2+k(b_i^2+c_i^2),c_i^2+k(b_i^2+c_i^2)) & \text{if}\;p=4k+1\\
Q_p(a_i^2+b_i^2+c_i^2,a_ib_i+a_ic_i+b_ic_i+(b_i^2+c_i^2)k,a_ib_i+a_ic_i+b_ic_i+(b_i^2+c_i^2)k)  & \text{if}\;p=4k+3\\
\end{cases}.\]

\noindent We shall now calculate $ Q_p(a_i,b_i,c_i)Q_p(a_j,b_j,c_j)^T$. First we will consider the case when $p=4k+1$ and then the case when $p=4k+3$.

\begin{thrm}\label{T:1} If $p=4k+1$ then $Q_p(a_i,b_i,c_i)Q_p(a_j,b_j,c_j)^T$
\[=Q_p(a_ia_j,a_ib_j+b_ia_j+(k+1)b_ib_j+k(b_ic_j+c_ib_j)+kc_ic_j,a_ic_j+c_ia_j+kb_ib_j+k(b_ic_j+c_ib_j+(k+1)c_ic_j).\]
\end{thrm}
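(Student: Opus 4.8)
The plan is to compute the product $Q_p(a_i,b_i,c_i)Q_p(a_j,b_j,c_j)^T$ by a direct entrywise calculation, exploiting the structure of quadratic residue circulant matrices together with well-known counting facts about quadratic residues modulo a prime $p \equiv 1 \pmod 4$. First I would recall that any $p\times p$ circulant matrix is determined by its first row, and that the product of two circulant matrices is again circulant; moreover, $Q_p(a,b,c)^T$ is itself the quadratic residue circulant matrix obtained by transposing, and since $p\equiv 1\pmod 4$ the set of quadratic residues is symmetric (i.e.\ $-1$ is a QR), so $Q_p(a,b,c)^T = Q_p(a,b,c)$. Hence the target product is $Q_p(a_i,b_i,c_i)Q_p(a_j,b_j,c_j)$, a circulant matrix, and it suffices to identify its first-row entries in positions $0$ (the constant $a$-slot), a quadratic residue position (the $b$-slot), and a quadratic non-residue position (the $c$-slot).

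The key computational step is to evaluate the $(0,\ell)$ entry of the product for $\ell \in \{0, \text{QR}, \text{NQR}\}$ as a sum $\sum_{m} (Q_i)_{0,m}(Q_j)_{m,\ell} = \sum_m (Q_i)_{0,m}(Q_j)_{0,\ell-m}$, and then count how many terms of each of the nine possible types $\{a_i b_j, a_i c_j, b_i a_j, c_i a_j, b_i b_j, b_i c_j, c_i b_j, c_i c_j\}$ (plus the single $a_ia_j$) arise. This reduces to the standard ``Jacobi-sum'' style counting: for $p = 4k+1$, the number of pairs $(x,y)$ of nonzero residues with $x+y$ fixed, where $x$ ranges over QRs and $y$ over QRs, etc.\ These counts are exactly the entries of the multiplication table of the quadratic residue/non-residue ``difference set'' and, for $p\equiv 1\pmod 4$, give: QR+QR hits a given QR in $k-1$ ways and a given NQR in $k$ ways; QR+NQR hits a given QR in $k$ ways and a NQR in $k$ ways; and the $0$-slot (i.e.\ $x+y\equiv 0$) is reached by QR+QR (since $-1$ is a QR) and by NQR+NQR. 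Assembling these multiplicities and reducing modulo $2$ (characteristic $2$, so only parities matter, and $k+1 \equiv k-1$, etc.) yields the three stated first-row entries.

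Concretely, I would organize the bookkeeping in a small table: for each target slot, list the contribution from the $a$-term of $Q_i$ paired against $Q_j$, the $b$-term of $Q_i$ against $Q_j$, and the $c$-term of $Q_i$ against $Q_j$, using the symmetry $Q_j^T = Q_j$ to handle the $\ell - m$ indexing. The $a$-slot ($\ell=0$) picks up $a_ia_j$ once, $b_ib_j$ with multiplicity $k$ (NQR$+$NQR solutions, or equivalently QR$+$QR since $\ell=0$), $c_ic_j$ with multiplicity $k$, and the cross terms $b_ic_j$, $c_ib_j$ with the complementary multiplicities that cancel or combine appropriately mod $2$ — this should collapse to exactly the diagonal case already quoted from \cite{gaborit} when $i=j$, which is a useful consistency check. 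The off-diagonal $b$- and $c$-slots are handled identically, and the coefficients $k+1$, $k$ in the statement are precisely the QR$+$QR and QR$+$NQR incidence numbers.

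The main obstacle I anticipate is purely combinatorial rather than conceptual: getting the nine incidence counts correct and consistently oriented (which residue set indexes rows vs.\ columns, and the effect of the sign in $\ell - m$), since an off-by-one or a swapped ``$k$ versus $k+1$'' propagates into the wrong final formula. To control this I would (a) verify the specialization $i=j$ reproduces the known diagonal product, and (b) check the formula on a small explicit prime such as $p=5$ ($k=1$) by multiplying two generic $5\times 5$ quadratic residue circulants symbolically. Once the incidence table is pinned down, the rest is a routine substitution and a mod-$2$ simplification.
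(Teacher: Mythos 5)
Your overall route is sound and, once organized, is essentially the same proof as the paper's: the bookkeeping table you describe (the $a$-, $b$-, $c$-terms of $Q_i$ paired against those of $Q_j$) is exactly the expansion $Q_p(a_i,b_i,c_i)Q_p(a_j,b_j,c_j)^T=(a_iI+b_iQ+c_iN)(a_jI+b_jQ^T+c_jN^T)$ with $Q=Q_p(0,1,0)$ and $N=Q_p(0,0,1)$, followed by collecting coefficients of $I$, $Q$, $N$. The only real difference is that the paper simply quotes from \cite{gaborit} the identities $Q=Q^T$, $N=N^T$, $QQ^T=(k+1)Q+kN$, $QN^T=NQ^T=k(Q+N)$, $NN^T=kQ+(k+1)N$, whereas you propose to re-derive them by cyclotomic incidence counting. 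That buys self-containedness (your counts $k-1$, $k$, $k$ for the nonzero slots are exactly where the coefficients $k+1$ and $k$ in the statement come from, after reducing mod $2$), at the cost of more opportunities for the off-by-one errors you yourself flag.

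One such error is already in your sketch: in the $\ell=0$ slot the multiplicity of $b_ib_j$ (and of $c_ic_j$) is not $k$ but $2k$, because $-1$ is a quadratic residue when $p=4k+1$, so every one of the $2k$ residues $m$ satisfies that both $m$ and $-m$ are quadratic residues. It is precisely $2k\equiv 0$ in characteristic $2$ that makes the $a$-slot collapse to $a_ia_j$ alone (the cross terms contribute exactly $0$ there, not merely ``cancelling'' multiplicities, since $m$ and $-m$ always have the same quadratic character). With multiplicity $k$ as written, the $a$-slot would acquire a spurious $k(b_ib_j+c_ic_j)$, which is nonzero whenever $k$ is odd and would fail your own $i=j$ consistency check against Gaborit's diagonal formula. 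Fix that one count and the rest of the table, assembled as you describe, reproduces the stated result (note in passing that the displayed formula in the theorem has a misplaced parenthesis; the last term should read $+(k+1)c_ic_j$ outside the factor $k(\cdot)$, as the paper's own computation confirms).
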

\begin{proof}
Assume that $p=4k+1$. Let $Q=Q_p(0,1,0)$ and $N=Q_p(0,0,1)$, then
\[
\begin{split}
Q_p(a_i,b_i,c_i)Q_p(a_j,b_j,c_j)^T&=(a_iI+b_iQ+c_iN)(a_jI+b_jQ+c_jN)^T\\
&=(a_iI+b_iQ+c_iN)(a_jI+b_jQ^T+c_jN^T)\\
&=a_ia_jI+a_ib_jQ^T+a_ic_jN^T+b_ia_jQ+b_ib_jQQ^T\\
&\;\;\;\;+b_ic_jQN^T+c_ia_jN+c_ib_jNQ^T+c_ic_jNN^T.
\end{split}
\]

\noindent Recall (\cite{gaborit}) that $Q=Q^T$, $N=N^T$, $QQ^T=(k+1)Q+kN$, $QN^T=NQ^T=k(Q+N)$ and $NN^T=kQ+(k+1)N$. Therefore,
\[
\begin{split}
Q_p(a_i,b_i,c_i)Q_p(a_j,b_j,c_j)^T=&a_ia_jI+(a_ib_j+b_ia_j)Q+(a_ic_j+c_ia_j)N+b_ib_j( (k+1)Q+kN)\\
&+(b_ic_i+c_ib_j)(k(Q+N))+c_ic_j(kQ+(k+1)N)\\
=&a_ia_jI+(a_ib_j+b_ia_j)Q+(a_ic_j+c_ia_j)N+b_ib_j(k+1)Q+b_ib_jkN\\
&+(b_ic_i+c_ib_j)kQ+(b_ic_i+c_ib_j)kN+c_ic_jkQ+c_ic_j(k+1)N\\
=&I[a_ia_j]+Q[a_ib_j+b_ia_j+(k+1)b_ib_j+k(b_ic_j+c_ib_j)+kc_ic_j]\\
&+N[a_ic_j+c_ia_j+kb_ib_j+k(b_ic_j+c_ib_j)+(k+1)c_ic_j]
\end{split}
\]
$=Q_p(a_ia_j,a_ib_j+b_ia_j+(k+1)b_ib_j+k(b_ic_j+c_ib_j)+kc_ic_j,a_ic_j+c_ia_j+kb_ib_j+k(b_ic_j+c_ib_j)+(k+1)c_ic_j).$
\end{proof}

\begin{thrm}\label{T:2} If $p=4k+3$ then $Q_p(a_i,b_i,c_i)Q_p(a_j,b_j,c_j)^T$
\[
\begin{split}
=Q_p(&a_ia_j+b_ib_j+c_ic_j,(a_ic_j+b_ia_j)+k(b_ib_j+c_ic_j)+kb_ic_j+(k+1)c_ib_j,\\
&(a_ib_j+c_ia_j)+k(b_ib_j+c_ic_j)+(k+1)b_ic_j+kc_ib_j)
\end{split}.\]
\end{thrm}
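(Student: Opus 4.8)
The plan is to mimic exactly the argument used for Theorem~\ref{T:1}, the only difference being that the relevant multiplication rules for $Q$ and $N$ change when $p = 4k+3$. Let me think about what those rules are and how the computation will go.

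For $p = 4k+3$, write $Q = Q_p(0,1,0)$ and $N = Q_p(0,0,1)$, so that $Q_p(a_i,b_i,c_i) = a_iI + b_iQ + c_iN$. The key algebraic facts I would quote from \cite{gaborit} are: when $p \equiv 3 \pmod 4$, $-1$ is a non-residue, so transposing swaps the roles of residues and non-residues, giving $Q^T = N$ and $N^T = Q$ (this is the crucial sign-type change from the $p \equiv 1$ case, where $Q^T = Q$). The products of the circulant matrices themselves satisfy $Q^2 + N^2 + (Q+N) = $ (something from $(1+Q+N)^2$ type identities); more precisely I would extract from Gaborit's paper the relations $QN = NQ = kI + k(Q+N) + \text{lower}$ — let me instead just use what is forced: I need $QQ^T = QN$, $NN^T = NQ$, $QN^T = Q^2$, $NQ^T = N^2$, and from the known formula for $Q_p(a,b,c)Q_p(a,b,c)^T$ in the $p=4k+3$ case (quoted in the excerpt), I can reverse-engineer $QN$, $Q^2$, $N^2$ by specializing $(a,b,c) = (0,1,0)$ and $(0,0,1)$ and $(0,1,1)$.

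The main steps in order: (i) expand $Q_p(a_i,b_i,c_i)Q_p(a_j,b_j,c_j)^T = (a_iI+b_iQ+c_iN)(a_jI + b_jN + c_jQ)$ using $Q^T=N$, $N^T=Q$; (ii) collect the nine terms, which involve $I$, $Q$, $N$, $QN$, $NQ$, $Q^2$, $N^2$; (iii) substitute the reduction rules for $Q^2$, $N^2$, $QN=NQ$ in terms of $I$, $Q$, $N$ (noting $QN=NQ$ since $R$ is commutative and these are circulants over $R$, so they commute); (iv) regroup the coefficients of $I$, $Q$, $N$ and check they match the claimed $Q_p(\cdot,\cdot,\cdot)$. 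The coefficient of $I$ should come out to $a_ia_j + b_ic_j\cdot(\text{coeff})+\dots$ and simplify to $a_ia_j+b_ib_j+c_ic_j$ once the $Q^2,N^2,QN$ reductions contribute their $I$-parts; similarly for the $Q$- and $N$-coefficients.

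The main obstacle is bookkeeping: getting the reduction rules for $Q^2$, $N^2$, and $QN$ correct in the $p=4k+3$ case (in particular which of $Q$, $N$ picks up the $(k+1)$ versus the $k$), and then tracking all nine cross-terms without sign or index errors — note that unlike the $p=4k+1$ case the product is no longer symmetric in $(b,c)$, which is exactly why the two lower entries of the resulting matrix differ by the asymmetric terms $(k+1)b_ic_j + kc_ib_j$ versus $kb_ic_j + (k+1)c_ib_j$. Once the $Q$-$N$ arithmetic is pinned down, the rest is a routine (if tedious) expansion identical in spirit to the proof of Theorem~\ref{T:1}, so I would present it in the same three-display format: expand, apply the rules, regroup.
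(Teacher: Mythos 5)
Your overall plan coincides with the paper's proof of Theorem \ref{T:2}: write $Q_p(a_i,b_i,c_i)=a_iI+b_iQ+c_iN$, use $Q^T=N$, $N^T=Q$, expand the nine terms, substitute the product relations, and regroup the coefficients of $I$, $Q$, $N$. The paper simply quotes from \cite{gaborit} the four relations $QQ^T=NN^T=I+kQ+kN$, $QN^T=kQ+(k+1)N$, $NQ^T=(k+1)Q+kN$ and substitutes.

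There is, however, a genuine gap in your fallback plan for obtaining these relations. You propose to reverse-engineer $QN$, $Q^2$ and $N^2$ by specializing the known self-product formula $Q_p(a,b,c)Q_p(a,b,c)^T$ at $(0,1,0)$, $(0,0,1)$ and $(0,1,1)$. Specializing $(0,1,0)$ and $(0,0,1)$ does give $QN=NQ=I+kQ+kN$, but no specialization can separate $Q^2$ from $N^2$: expanding $(aI+bQ+cN)(aI+bN+cQ)$ shows that $Q^2$ and $N^2$ only ever enter the self-product through the single coefficient $bc$, i.e.\ only the combination $Q^2+N^2$ (which equals $Q+N$ in characteristic $2$) is determined. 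In the general product $Q_iQ_j^T$ the terms $b_ic_jQN^T=b_ic_jQ^2$ and $c_ib_jNQ^T=c_ib_jN^2$ carry \emph{independent} coefficients, and the split $Q^2=kQ+(k+1)N$ versus $N^2=(k+1)Q+kN$ is exactly what produces the asymmetric terms $kb_ic_j+(k+1)c_ib_j$ and $(k+1)b_ic_j+kc_ib_j$ in the statement. So you must take $QN^T$ and $NQ^T$ individually from \cite{gaborit} (or prove them separately, e.g.\ by counting representations of residues and non-residues as differences), rather than deducing them from the self-product formula; with that input supplied, the rest of your expansion goes through exactly as in the paper.
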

\begin{proof}
Assume that $p=4k+3$. Then
\[
\begin{split}
Q_p(a_i,b_i,c_i)Q_p(a_j,b_j,c_j)^T&=a_ia_jI+a_ib_jQ^T+a_ic_jN^T+b_ia_jQ+b_ib_jQQ^T\\
&\;\;\;\;+b_ic_jQN^T+c_ia_jN+c_ib_jNQ^T+c_ic_jNN^T.
\end{split}
\]

\noindent Recall (\cite{gaborit}) that $Q=N^T$, $QQ^T=NN^T=I+kQ+kN$, $QN^T=kQ+(k+1)N$ and $NQ^T=(k+1)Q+kN$. Therefore,

\[
\begin{split}
Q_p(a_i,b_i,c_i)Q_p(a_j,b_j,c_j)^T=&a_ia_jI+(a_ic_j+b_ia_j)Q+(a_ib_j+c_ia_j)N+(b_ib_j+c_ic_j)QQ^T+b_ic_jQN^T+c_ib_jNQ^T\\
=&a_ia_jI+(a_ic_j+b_ia_j)Q+(a_ib_j+c_ia_j)N+(b_ib_j+c_ic_j)(I+kQ+kN)\\
&+b_ic_j(kQ+(k+1)N)+c_ib_j((k+1)Q+kN)\\
&=a_ia_jI+(a_ic_j+b_ia_j)Q+(a_ib_j+c_ia_j)N+(b_ib_j+c_ic_j)I+k(b_ib_j+c_ic_j)Q\\
&+k(b_ib_j+c_ic_j)N+kb_ic_jQ+(k+1)b_ic_jN+(k+1)c_ib_jQ+kc_ib_jN\\
&=I[a_ia_j+b_ib_j+c_ic_j]+Q[(a_ic_j+b_ia_j)+k(b_ib_j+c_ic_j)+kb_ic_j\\
&+(k+1)c_ib_j]+N[(a_ib_j+c_ia_j)+k(b_ib_j+c_ic_j)+(k+1)b_ic_j+kc_ib_j]\\
&=Q_p(a_ia_j+b_ib_j+c_ic_j,(a_ic_j+b_ia_j)+k(b_ib_j+c_ic_j)+kb_ic_j+(k+1)c_ib_j,\\
&(a_ib_j+c_ia_j)+k(b_ib_j+c_ic_j)+(k+1)b_ic_j+kc_ib_j)
\end{split}
\]

\end{proof}

\section{The Construction}

\noindent We shall now describe the main construction itself and provide conditions when this
construction produces self-dual codes. Let $Q_l=Q_p(a_l,b_l,c_l)$. Define the matrix
\[
M=
\left(
\begin{array}{ccc|ccc}
Q_0& Q_1 & Q_2 & A_0 & A_1 & A_2\\
Q_2  & Q_0& Q_1  &A_2 &  A_0 & A_1 \\
Q_1  & Q_2  & Q_0 &A_1 & A_2 &  A_0   \\
\end{array}
\right)
\]
\noindent and let $\mathcal{C}$ be the linear code of length $6p$ generated by the matrix $M$, where $A_i$ are $p \times p$ circulant matrices over $R$. Let $CIRC(A_1,\ldots,A_n)$ be the block circulant matrix where the first row of block matrices are $A_1,\ldots,A_n$ and $a_{[l]_3}=a_{ (l \mod 3 )}$, then

\[ MM^T=CIRC \left( \sum_{i=0}^2(Q_iQ_i^T+A_iA_i^T), \sum_{i=0}^{2} Q_iQ_{[(i+2)]_3}^T+A_iA_{[(i+2)]_3}^T , \left(  \sum_{i=0}^{2} Q_iQ_{[(i+2)]_3}^T+A_iA_{[(i+2)]_3}^T \right)^T\right).\]


\noindent Clearly, $C$ is self-orthogonal if and only $\ds{\sum_{i=0}^2A_iA_i^T=\sum_{i=0}^2Q_iQ_i^T}$ and $\ds{\sum_{i=1}^{3}}A_iA_{[(i+2)]_3}^T=\sum_{i=1}^{3} Q_iQ_{[(i+2)]_3}^T$. \newline Using Theorem \ref{T:1}, we can see that
$\ds{\sum_{i=0}^2Q_iQ_i^T}=$
\[
\begin{cases}
Q_p \left( \ds{\sum_{i=0}^2}a_i^2,\sum_{i=0}^2(b_i^2+k(b_i^2+c_i^2)),\sum_{i=0}^2(c_i^2+k(b_i^2+c_i^2))  \right) &  \text{if}\;p=4k+1\\
Q_p \left(\ds{\sum_{i=0}^2}(a_i^2+b_i^2+c_i^2),\sum_{i=0}^2(a_ib_i+a_ic_i+b_ic_i+k(b_i^2+c_i^2),\sum_{i=0}^2(a_ib_i+a_ic_i+b_ic_i+k(b_i^2+c_i^2)   \right) &  \text{if}\;p=4k+3
\end{cases}.
\]

Additionally (by Theorem \ref{T:2}), if $p=4k+1$ then
\[
\begin{split}
\sum_{i=1}^{3} Q_i & Q_{[(i+2)]_3}^T=Q_p\left( \ds{\sum_{i=0}^2}\right. a_ia_{[(i+2)]_3},\sum_{i=0}^2( a_ib_{[(i+2)]_3}+b_ia_{[(i+2)]_3}+(k+1)b_ib_{[(i+2)]_3}+k(b_ic_{[(i+2)]_3}+c_ib_{[(i+2)]_3}\\
 &+kc_ic_{[(i+2)]_3}), \left. \ds{\sum_{i=0}^2} (a_ic_{[(i+2)]_3}+c_ia_{[(i+2)]_3}+kb_ib_{[(i+2)]_3}+k(b_ic_{[(i+2)]_3}+c_ib_{[(i+2)]_3}+(k+1)c_ic_{[(i+2)]_3}) \right)\\
\end{split}
 \]
and if $p=4k+3$ then
\[
\begin{split}
  \sum_{i=1}^{3} Q_iQ_{[(i+2)]_3}^T&=Q_p \left(\ds{\sum_{i=0}^2}\right.(a_ia_{[(i+2)]_3}+b_ib_{[(i+2)]_3}+c_ic_{[(i+2)]_3},\ds{\sum_{i=0}^2}[(a_ic_{[(i+2)]_3}+b_ia_{[(i+2)]_3})+k(b_ib_{[(i+2)]_3}+c_ic_{[(i+2)]_3})\\
  &+kb_ic_{[(i+2)]_3} +(k+1)c_ib_{[(i+2)]_3}],\ds{\sum_{i=0}^2}[(a_ib_{[(i+2)]_3}+c_ia_{[(i+2)]_3})+k(b_ib_{[(i+2)]_3}+c_ic_{[(i+2)]_3})\\
  & +(k+1)b_ic_{[(i+2)]_3}+kc_ib_{[(i+2)]_3})] \Bigg)
\end{split}
\]

\noindent Combining these results, we reach the following:

\begin{thrm}
Assume that $p=4k+1$. Then, $C$ is a self-orthogonal code if and only if the following conditions hold:
\begin{enumerate}
\item $\ds{\sum_{i=0}^2A_iA_i^T=Q_p \left( \ds{\sum_{i=0}^2}a_i^2,\sum_{i=0}^2(b_i^2+k(b_i^2+c_i^2)),\sum_{i=0}^2(c_i^2+k(b_i^2+c_i^2))  \right)}$,
\item
\[
\begin{split}
\sum_{i=1}^{3}& A_iA_{[(i+2)]_3}^T =Q_p\left( \ds{\sum_{i=0}^2}\right. a_ia_{[(i+2)]_3},\sum_{i=0}^2( a_ib_{[(i+2)]_3}+b_ia_{[(i+2)]_3}+(k+1)b_ib_{[(i+2)]_3}+k(b_ic_{[(i+2)]_3}+c_ib_{[(i+2)]_3}\\
 & +kc_ic_{[(i+2)]_3}),\left. \ds{\sum_{i=0}^2} (a_ic_{[(i+2)]_3}+c_ia_{[(i+2)]_3}+kb_ib_{[(i+2)]_3}+k(b_ic_{[(i+2)]_3}+c_ib_{[(i+2)]_3}+(k+1)c_ic_{[(i+2)]_3}) \right).
\end{split}
\]
\end{enumerate}
\end{thrm}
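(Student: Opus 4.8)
The plan is to follow the same computational strategy used to prove Theorems \ref{T:1} and \ref{T:2}, but now applied to the block matrix $M$. First I would compute $MM^T$ directly. Since $M = CIRC(B_0, B_1, B_2)$ where $B_i$ is the $p\times 2p$ block $(Q_i \mid A_i)$ arranged cyclically (more precisely, the rows of $M$ are block-cyclic shifts of $(Q_0, Q_1, Q_2, A_0, A_1, A_2)$), the product $MM^T$ is itself a $3\times 3$ block circulant matrix. The entry in block-position $(s,t)$ is a sum of terms $Q_iQ_j^T + A_iA_j^T$ where the indices $i,j$ range according to the block-circulant shift pattern. Collecting these, the diagonal blocks all equal $\sum_{i=0}^2 (Q_iQ_i^T + A_iA_i^T)$ and the off-diagonal blocks equal $\sum_{i=0}^2 (Q_iQ_{[(i+2)]_3}^T + A_iA_{[(i+2)]_3}^T)$ and its transpose, exactly as asserted in the displayed formula for $MM^T$ in Section 4.

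Next, I would observe that $C$ is self-orthogonal if and only if $MM^T = 0$, which by the block-circulant structure is equivalent to the vanishing of the first-row blocks, i.e. $\sum_{i=0}^2 (Q_iQ_i^T + A_iA_i^T) = 0$ and $\sum_{i=0}^2 (Q_iQ_{[(i+2)]_3}^T + A_iA_{[(i+2)]_3}^T) = 0$ (the third block being the transpose of the second, it vanishes automatically). Since the ring $R$ has characteristic $2$, these are equivalent to $\sum_{i=0}^2 A_iA_i^T = \sum_{i=0}^2 Q_iQ_i^T$ and $\sum_{i=1}^3 A_iA_{[(i+2)]_3}^T = \sum_{i=1}^3 Q_iQ_{[(i+2)]_3}^T$.

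Finally, I would substitute the explicit evaluations of $\sum_{i=0}^2 Q_iQ_i^T$ and $\sum_{i=1}^3 Q_iQ_{[(i+2)]_3}^T$ obtained just above the theorem statement (which in turn come from applying Theorem \ref{T:1} termwise and using $p = 4k+1$). This yields precisely conditions (1) and (2) in the statement. The bulk of the argument is bookkeeping: matching the block-circulant index shifts with the subscript arithmetic $[(i+2)]_3$, and carefully tracking which $Q_iQ_j^T$ terms appear in each off-diagonal block. The main obstacle I anticipate is the index-chasing in the off-diagonal block: one must verify that the collection of pairs $(i, j)$ arising there is exactly $\{(i, [(i+2)]_3) : i = 0,1,2\}$ after using the symmetry $Q_iQ_j^T + Q_jQ_i^T$ relations and the transpose structure, so that no cross-terms are missed or double-counted. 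Once that indexing is pinned down, the passage from the matrix identity to the stated scalar conditions is immediate from Theorem \ref{T:1} and the characteristic-$2$ hypothesis.
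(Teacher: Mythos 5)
Your proposal is correct and follows essentially the same route as the paper, which likewise computes $MM^T$ as a block circulant, reduces self-orthogonality to the vanishing of its first-row blocks (equivalently, in characteristic $2$, to $\sum A_iA_i^T=\sum Q_iQ_i^T$ and $\sum A_iA_{[(i+2)]_3}^T=\sum Q_iQ_{[(i+2)]_3}^T$), and then substitutes the explicit evaluation of the $Q$-sums from Theorem \ref{T:1}. The only nitpick is that $M$ is not literally $CIRC(B_0,B_1,B_2)$ with $B_i=(Q_i\mid A_i)$ but rather $(CIRC(Q_0,Q_1,Q_2)\mid CIRC(A_0,A_1,A_2))$; your subsequent description of the blocks of $MM^T$ is nonetheless the correct one.
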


\begin{thrm}
Assume that $p=4k+3$. Then, $C$ is a self-orthogonal code if and only if the following conditions hold:
\begin{enumerate}
\item $\ds{\sum_{i=0}^2A_iA_i^T=Q_p \left(\ds{\sum_{i=0}^2}(a_i^2+b_i^2+c_i^2),\sum_{i=0}^2(a_ib_i+a_ic_i+b_ic_i+k(b_i^2+c_i^2),\sum_{i=0}^2(a_ib_i+a_ic_i+b_ic_i+k(b_i^2+c_i^2)   \right) }$,
\item
\[
\begin{split}
\sum_{i=1}^{3}A_iA_{[(i+2)]_3}^T&=Q_p \left(\ds{\sum_{i=0}^2}\right.(a_ia_{[(i+2)]_3}+b_ib_{[(i+2)]_3}+c_ic_{[(i+2)]_3},\ds{\sum_{i=0}^2}[(a_ic_{[(i+2)]_3}+b_ia_{[(i+2)]_3})+kb_ib_{[(i+2)]_3}\\
  &+kc_ic_{[(i+2)]_3}+kb_ic_{[(i+2)]_3} +(k+1)c_ib_{[(i+2)]_3}],\ds{\sum_{i=0}^2}[(a_ib_{[(i+2)]_3}+c_ia_{[(i+2)]_3})+kb_ib_{[(i+2)]_3}\\
  &+kc_ic_{[(i+2)]_3} +(k+1)b_ic_{[(i+2)]_3}+kc_ib_{[(i+2)]_3})] \Bigg).
\end{split}
\]
\end{enumerate}
\end{thrm}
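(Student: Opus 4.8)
The plan is to combine the block-circulant identity for $MM^T$ derived above with the product formula of Theorem~\ref{T:2}. Write $S_0 = \sum_{i=0}^2 Q_iQ_i^T$, $T_0 = \sum_{i=0}^2 A_iA_i^T$, $S_1 = \sum_{i=0}^2 Q_iQ_{[(i+2)]_3}^T$ and $T_1 = \sum_{i=0}^2 A_iA_{[(i+2)]_3}^T$. Since the rows of $M$ span $C$ and their Gram matrix under the Euclidean form is $MM^T$, the code $C$ is self-orthogonal if and only if $MM^T = 0$. By the identity recalled above, $MM^T = CIRC(S_0+T_0,\ S_1+T_1,\ (S_1+T_1)^T)$, and a block-circulant matrix vanishes if and only if each of its defining blocks vanishes; here the third block is the transpose of the second and so imposes no further condition. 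Thus $MM^T = 0$ is equivalent to $S_0 + T_0 = 0$ and $S_1 + T_1 = 0$, that is, since $R$ has characteristic $2$, to $T_0 = S_0$ and $T_1 = S_1$ --- precisely the two self-orthogonality conditions recorded for general $p$ above.

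It remains to make $S_0$ and $S_1$ explicit when $p = 4k+3$, and here the two bullet identities fall out by substitution. The only structural fact used is that $(a,b,c)\mapsto Q_p(a,b,c)$ is additive, so a sum of matrices each of the form $Q_p(\cdot,\cdot,\cdot)$ is again of that form with parameters added entrywise. For $S_0$ I would either invoke the $p=4k+3$ branch of the formula for $Q_iQ_i^T$ recalled at the beginning of Section~3, or set $j=i$ in Theorem~\ref{T:2} and simplify using $2kb_ic_i = 0$; summing over $i = 0,1,2$ and applying additivity of $Q_p$ then gives exactly the parameter triple in condition~(1). For $S_1$ I would apply Theorem~\ref{T:2} with the pair $(i,[(i+2)]_3)$ in the role of $(i,j)$ and sum over $i$, again using additivity; this reproduces verbatim the expression for $\sum_{i=1}^{3} Q_iQ_{[(i+2)]_3}^T$ displayed in the $p=4k+3$ paragraph preceding the theorem, which is exactly the triple in condition~(2). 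Substituting these into $T_0 = S_0$ and $T_1 = S_1$ completes the proof.

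No genuinely hard step is expected; the work is organisational. The one thing to watch carefully is the bookkeeping: keeping the cyclic index shift $[(i+2)]_3$ straight across all three summands, and tracking the coefficients $k$ and $k+1$, which Theorem~\ref{T:2} attaches asymmetrically to the $b$- and $c$-parameters in the $p=4k+3$ case (unlike $p=4k+1$, the cross term genuinely distinguishes $b_ic_{[(i+2)]_3}$ from $c_ib_{[(i+2)]_3}$). One should also be careful with the characteristic-$2$ cancellations --- confirming, for instance, that cross terms such as $2a_ia_{[(i+2)]_3}$ drop out while $kb_ic_{[(i+2)]_3} + (k+1)c_ib_{[(i+2)]_3}$ does not collapse further --- since a coefficient slip there is the most likely source of error.
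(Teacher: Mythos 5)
Your proposal is correct and follows essentially the same route as the paper, which derives the block-circulant form of $MM^T$, reduces self-orthogonality to $\sum A_iA_i^T=\sum Q_iQ_i^T$ and $\sum A_iA_{[(i+2)]_3}^T=\sum Q_iQ_{[(i+2)]_3}^T$, and then substitutes the explicit $p=4k+3$ evaluations of the right-hand sides obtained from Theorem~\ref{T:2}. The paper in fact states this theorem with no separate proof beyond that preceding discussion, so your write-up, including the characteristic-$2$ simplification $2kb_ic_i=0$ for the diagonal terms, supplies exactly the intended argument.
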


\begin{thrm}
The matrix $M$ has full rank iff the following conditions hold:
\begin{enumerate}
\item $\ds{ \sum_{i=0}^2 (A_iC_i+A_iD_i )=I_p   }$,
\item $\ds{ \sum_{i=0}^2 (A_iC_{[i+2]_3}+A_iD_{[i+2]_3} )=0_p   }$ and
\item $\ds{ \sum_{i=0}^2 (A_iC_{[i+1]_3}+A_iD_{[i+1]_3} )=0_p   }$
\end{enumerate}
for some $p \times p$ circulant matrices $C_k$ and $D_l$ over $R$.
\end{thrm}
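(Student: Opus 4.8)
The plan is to recognize $M$ as a block matrix whose block structure is governed by a polynomial (group-algebra) ring, so that ``full rank'' becomes an invertibility statement that can be checked by exhibiting an explicit right inverse. Write $R_p = R[x]/(x^p-1)$ for the ring of $p\times p$ circulant matrices over $R$ (a circulant is identified with the polynomial formed from its first row). Since $M$ has $3p$ rows and $6p$ columns, $M$ has full rank over the Frobenius ring $R$ precisely when there is a $6p\times 3p$ matrix $M'$ with $MM'=I_{3p}$; one then checks that such an $M'$ can be taken to have the same block-circulant shape. I would look for $M'$ of the form
\[
M' =
\left(
\begin{array}{ccc}
C_0 & C_2 & C_1\\
C_1 & C_0 & C_2\\
C_2 & C_1 & C_0\\ \hline
D_0 & D_2 & D_1\\
D_1 & D_0 & D_2\\
D_2 & D_1 & D_0
\end{array}
\right)
\]
with $C_k, D_l \in R_p$, so that $M'$ is built from the two ``column'' circulant-of-circulants blocks determined by $(C_0,C_1,C_2)$ and $(D_0,D_1,D_2)$.

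Next I would compute the product $M M'$. By the same block-circulant multiplication rule already used in Section 4 to compute $MM^T$ (namely that a product of $CIRC$-type block matrices is again $CIRC$-type, with entries obtained by the appropriate index-convolution modulo $3$), $MM'$ is a $3\times 3$ block-circulant matrix whose three defining $p\times p$ blocks are
\[
E_0=\sum_{i=0}^2 \bigl(A_iC_i + A_iD_i\bigr),\quad
E_1=\sum_{i=0}^2 \bigl(A_iC_{[i+2]_3}+A_iD_{[i+2]_3}\bigr),\quad
E_2=\sum_{i=0}^2 \bigl(A_iC_{[i+1]_3}+A_iD_{[i+1]_3}\bigr),
\]
where here $A_i$ should be read as the appropriate circulant ($Q_i$ or $A_i$) contributions assembled together; since everything commutes in $R_p$ this collapses to the stated expressions. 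Then $MM' = I_{3p}$ is equivalent to $E_0 = I_p$ and $E_1 = E_2 = 0_p$, which are exactly conditions (1), (2), (3). This gives the ``if'' direction: if such $C_k,D_l$ exist satisfying (1)--(3), then $M M' = I_{3p}$, so $M$ has a right inverse and hence full rank $3p$.

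For the converse, suppose $M$ has full rank $3p$. Since $R$ is a Frobenius ring, a matrix over $R$ with full row rank admits a right inverse (the row space is a free direct summand of $R^{6p}$, and one can split it off). So there is some $6p\times 3p$ matrix $\widetilde{M}'$ with $M\widetilde{M}' = I_{3p}$. The remaining point is to replace $\widetilde{M}'$ by one of the special block-circulant shape above. Here I would use the averaging trick over the cyclic group $\ZZ/3$: conjugating $\widetilde{M}'$ by the block-shift permutation that realizes the $CIRC$ structure of $M$ and summing the three conjugates produces a new right inverse that is genuinely block-circulant of the desired type (the factor $3$ is invertible in $R$ because $R$ has characteristic $2$; this is where characteristic $2$ is quietly used). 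Each $p\times p$ block of a block-circulant matrix over $R$ that right-inverts $M$ may moreover be taken circulant by a second averaging over the cyclic shift within the blocks, or simply because $M$'s blocks all lie in the commutative subring $R_p$, so the whole computation takes place over $R_p$ and solvability of the resulting linear system over $R_p$ forces the solution to be expressible with circulant $C_k,D_l$. Reading off the block equations of $M\widetilde{M}'=I_{3p}$ then yields exactly (1)--(3).

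The main obstacle is the converse: passing from ``some right inverse exists'' (which is the honest meaning of full rank over a Frobenius ring) to ``a right inverse of the prescribed symmetric block-circulant form exists.'' This needs the group-averaging argument over $\ZZ/3$ together with the fact that $M$'s blocks commute, and it is the one place where one must be careful that the construction is being done over the commutative ring $R_p = R[x]/(x^p-1)$ rather than over an arbitrary matrix ring; everything else is the routine block multiplication already carried out in Section 4.
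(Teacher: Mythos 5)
Your proposal is correct and follows the same overall route as the paper: identify ``full rank'' with the existence of a right inverse $N$ with $MN=I_{3p}$, argue that $N$ may be taken in the block-circulant-of-circulants shape $\bigl(CIRC(C_0,C_2,C_1);CIRC(D_0,D_2,D_1)\bigr)$, and then read conditions (1)--(3) off the resulting $CIRC$ product, exactly as in Section 4's computation of $MM^T$. The one place where you diverge is the reduction step in the converse. The paper does not average: it takes the first \emph{column} of an arbitrary right inverse $N$, splits it into six length-$p$ segments, and observes that because every block of $M$ is circulant (hence commutes with the cyclic shift) the circulants built from those segments already satisfy $MN''=(I_p,0_p,0_p)^T$, after which the outer $3\times 3$ $CIRC$ symmetry propagates $N''$ to the full block-circulant right inverse. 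Your alternative is to symmetrize an arbitrary right inverse by summing its three conjugates under the block-shift symmetry and then its $p$ conjugates under the within-block shift; this is equally valid here, since $3$ and $p$ are odd and hence equal to $1$ (in particular units) in characteristic $2$, so the sums are again right inverses and are invariant, hence of the required shape. The paper's column-propagation argument is marginally more economical (it never needs $3$ or $p$ to be invertible), while your averaging argument is more transparent and generalizes immediately to the $n\times n$-block version suggested in the conclusion whenever $n$ and $p$ are units in $R$. Both arguments silently correct the same typo in the statement: the sums should read $\sum_i(Q_iC_i+A_iD_i)$ rather than $\sum_i(A_iC_i+A_iD_i)$, as your parenthetical remark indicates.
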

\begin{proof}
Clearly,
\[M= \left(
\begin{array}{c|c}
CIRC(Q_0,Q_1,Q_2)&CIRC(A_0,A_1,A_2)
\end{array}
\right)\]
has full rank iff $MN=I_{3p}$ for some $6p \times 3p$ matrix $N$ over $R$. Let $N'=(n_1,\ldots,n_{6p})^T$ be the first column of $N$, clearly
$M( circ(n_1,\ldots,n_p)^T,\ldots, circ(n_{5p+1},\ldots,n_{6p})^T)^T=(I_p,0_p,0_p,0_p,0_p,0_p)^T$. If $N''=(C_0,C_1,C_2,D_0,D_1,D_2)^T$ is the matrix
that satisfies $MN''=(I_p,0_p,0_p,0_p,0_p,0_p)^T$, then $N$ can take the form

\[ N=\left(
\begin{array}{c}
CIRC(C_0,C_2,C_1)\\
CIRC(D_0,D_2,D_1)
\end{array}
\right)\]
where $C_k$ and $D_l$ are $p \times p$ circulant  matrices over $R$.  Now,
\[ MN =CIRC \left( \sum_{i=0}^2 (A_iC_i+A_iD_i ), \sum_{i=0}^2 (A_iC_{[i+2]_3}+A_iD_{[i+2]_3} ),\sum_{i=0}^2 (A_iC_{[i+1]_3}+A_iD_{[i+1]_3} )   \right)\]

\noindent and $M$ has full rank iff:

\begin{enumerate}
\item $\ds{ \sum_{i=0}^2 (A_iC_i+A_iD_i )=I_p   }$,
\item $\ds{ \sum_{i=0}^2 (A_iC_{[i+2]_3}+A_iD_{[i+2]_3} )=0_p   }$ and
\item $\ds{ \sum_{i=0}^2 (A_iC_{[i+1]_3}+A_iD_{[i+1]_3} )=0_p   }$
\end{enumerate}
\end{proof}

\begin{thrm}
Let $\mathcal{C}$ be self-dual. Then,
\[ \left( \sum_{i=0}^2Q_i \right)B+\left( \sum_{i=0}^2Q_i  \right)^TB'=I_p\]
for some $p \times p$ matrices $B$ and $B'$ over $R$.
\end{thrm}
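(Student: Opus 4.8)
The plan is to draw two consequences from the hypotheses --- that $M$ has full rank and that $\mathcal{C}$ is self-orthogonal --- and then combine them inside the commutative ring of $p\times p$ circulant matrices over $R$; observe that the $Q_i$, the $A_i$, and every matrix built from them by sums, products and transposes are circulant, hence pairwise commuting. Throughout I abbreviate $\Sigma_X:=\sum_{i=0}^2X_i$ for $X\in\{Q,A,C,D\}$, so that the goal becomes: produce matrices $B,B'$ with $\Sigma_Q B+\Sigma_Q^{T}B'=I_p$.

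\emph{Step 1 (a B\'ezout-type identity).} Since $\mathcal{C}$ is self-dual of length $6p$ it has dimension $3p$, so the $3p\times 6p$ matrix $M$ has full row rank; by the preceding theorem I obtain $p\times p$ circulant matrices $C_0,C_1,C_2,D_0,D_1,D_2$ over $R$ with
\begin{gather*}
\sum_{i=0}^2(Q_iC_i+A_iD_i)=I_p,\qquad
\sum_{i=0}^2\bigl(Q_iC_{[(i+2)]_3}+A_iD_{[(i+2)]_3}\bigr)=0_p,\\
\sum_{i=0}^2\bigl(Q_iC_{[(i+1)]_3}+A_iD_{[(i+1)]_3}\bigr)=0_p.
\end{gather*}
Adding these three identities and using that $C_i+C_{[(i+1)]_3}+C_{[(i+2)]_3}=\Sigma_C$ for each $i$ (and the analogue for the $D_i$) collapses the left-hand side to $\Sigma_Q\Sigma_C+\Sigma_A\Sigma_D$, so $\Sigma_Q\Sigma_C+\Sigma_A\Sigma_D=I_p$.

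\emph{Step 2 (the identity $\Sigma_A\Sigma_A^{T}=\Sigma_Q\Sigma_Q^{T}$).} Self-duality forces self-orthogonality, so $\sum_{i=0}^2A_iA_i^{T}=\sum_{i=0}^2Q_iQ_i^{T}$ and $\sum_{i=0}^2A_iA_{[(i+2)]_3}^{T}=\sum_{i=0}^2Q_iQ_{[(i+2)]_3}^{T}$. On the other hand, writing $\Sigma_A\Sigma_A^{T}=\sum_{i,j}A_iA_j^{T}$ and splitting the inner index into the three cases $j=i$, $j=[(i+1)]_3$, $j=[(i+2)]_3$, the purely formal identity $\sum_iA_iA_{[(i+1)]_3}^{T}=\bigl(\sum_iA_iA_{[(i+2)]_3}^{T}\bigr)^{T}$ (a reindexing using $(XY^{T})^{T}=YX^{T}$) yields $\Sigma_A\Sigma_A^{T}=\sum_iA_iA_i^{T}+S+S^{T}$ with $S:=\sum_iA_iA_{[(i+2)]_3}^{T}$, and in the same way $\Sigma_Q\Sigma_Q^{T}=\sum_iQ_iQ_i^{T}+T+T^{T}$ with $T:=\sum_iQ_iQ_{[(i+2)]_3}^{T}$. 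The two self-orthogonality equalities say precisely $\sum_iA_iA_i^{T}=\sum_iQ_iQ_i^{T}$ and $S=T$, whence $\Sigma_A\Sigma_A^{T}=\Sigma_Q\Sigma_Q^{T}$.

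\emph{Step 3 (elimination and conclusion).} Multiplying the identity of Step 1 on the right by $\Sigma_A^{T}$, then using Step 2 and commutativity of circulants to rewrite $\Sigma_A\Sigma_D\Sigma_A^{T}=\Sigma_A\Sigma_A^{T}\Sigma_D=\Sigma_Q\Sigma_Q^{T}\Sigma_D$, I obtain $\Sigma_Q\bigl(\Sigma_C\Sigma_A^{T}+\Sigma_Q^{T}\Sigma_D\bigr)=\Sigma_A^{T}$. Transposing gives $\Sigma_A=\Sigma_Q^{T}E^{T}$ with $E:=\Sigma_C\Sigma_A^{T}+\Sigma_Q^{T}\Sigma_D$. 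Substituting this back into $\Sigma_Q\Sigma_C+\Sigma_A\Sigma_D=I_p$ produces $\Sigma_Q\Sigma_C+\Sigma_Q^{T}\bigl(E^{T}\Sigma_D\bigr)=I_p$, so $B=\Sigma_C=\sum_{i=0}^2C_i$ and $B'=E^{T}\Sigma_D$ work. I expect Step 2 to be the main obstacle: neither $\Sigma_A\Sigma_A^{T}$ nor $\Sigma_Q\Sigma_Q^{T}$ is literally one of the quantities appearing in the self-orthogonality conditions, so the key point is to recognize that both products decompose into the same shape --- a ``diagonal'' term plus an off-diagonal term plus its transpose --- after which the conditions match the pieces; once $\Sigma_A\Sigma_A^{T}=\Sigma_Q\Sigma_Q^{T}$ is in hand, Step 3 is a short and essentially forced computation.
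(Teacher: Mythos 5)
Your proof is correct and follows essentially the same route as the paper's: both add the three full-rank (B\'ezout) identities to obtain $\Sigma_Q\Sigma_C+\Sigma_A\Sigma_D=I_p$, both extract $\Sigma_A\Sigma_A^T=\Sigma_Q\Sigma_Q^T$ from $MM^T=0$, and both then eliminate $\Sigma_A$ using commutativity of circulants. The only differences are cosmetic: the paper obtains $A_3A_3^T=Q_3Q_3^T$ by sandwiching $MM^T$ between $\left(I_p\ I_p\ I_p\right)$ and its transpose rather than by your index-splitting of $\sum_{i,j}A_iA_j^T$, and it performs the elimination by substituting the transposed B\'ezout identity into itself, landing on the same final identity with the term $Q_3Q_3^TD_3^TD_3$ absorbed into $B$ instead of $B'$.
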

\begin{proof}
By the previous result,
\begin{enumerate}
\item $\ds{ \sum_{i=0}^2 (A_iC_i+A_iD_i )=I_p   }$,
\item $\ds{ \sum_{i=0}^2 (A_iC_{[i+2]_3}+A_iD_{[i+2]_3} )=0_p   }$ and
\item $\ds{ \sum_{i=0}^2 (A_iC_{[i+1]_3}+A_iD_{[i+1]_3} )=0_p   }$.
\end{enumerate}

\noindent Adding these equations, we obtain that
\[  \left( \sum_{i=0}^2Q_i \right) \left(  \sum_{i=0}^2C_i\right) +\left( \sum_{i=0}^2A_i \right) \left(  \sum_{i=0}^2D_i\right) =I_p.\]

\noindent Let $Q_3=\ds{\sum_{i=0}^2} Q_i$, $A_3=\ds{\sum_{i=0}^2} A_i$, $C_3=\ds{\sum_{i=0}^2} C_i$ and $D_3=\ds{\sum_{i=0}^2} D_i$. Thus,
\[Q_3C_3+A_3D_3=I_p\]
and
\[ (Q_3C_3+A_3D_3)^T=C_3^TQ_3^T+D_3^TA_3^T=Q_3^TC_3^T+A_3^TD_3^T=I_p\]
since circulant matrices commute. Therfore,
\[
\begin{split}
  Q_3C_3+A_3D_3&=Q_3C_3+A_3(Q_3^TC_3^T+A_3^TD_3^T)D_3\\
  &=Q_3C_3+A_3Q_3^TC_3^TD_3+A_3A_3^TD_3^TD_3\\
  &=I_p.
\end{split}
\]

\noindent If $\mathcal{C}$ is self-dual, then $MM^T=0_{3p}$ and
\[
\left(\begin{array}{ccc}I_p&I_p&I_p\end{array}\right)
MM^T
\left(\begin{array}{ccc}I_p&I_p&I_p\end{array}\right)^T
=0_p.\]

\noindent Consequently,
\[
\left(
\begin{array}{cccccc}
Q_3&Q_3&Q_3&A_3&A_3&A_3
\end{array}
\right)
\left(
\begin{array}{cccccc}
Q_3&Q_3&Q_3&A_3&A_3&A_3
\end{array}
\right)^T=0_{p}\mbox{ and }Q_3Q_3^T=A_3A_3^T.
\]
\noindent Finally,
\[
\begin{split}
I_p &= Q_3C_3+A_3Q_3^TC_3^TD_3+A_3A_3^TD_3^TD_3\\
&= Q_3C_3+A_3Q_3^TC_3^TD_3+Q_3Q_3^TD_3^TD_3\\
&=Q_3C_3+Q_3Q_3^TD_3^TD_3+A_3Q_3^TC_3^TD_3\\
&=Q_3(C_3+Q_3^TD_3^TD_3)+Q_3^T(A_3C_3^TD_3)\\
&=Q_3B+Q_3^TB'
\end{split}
\]
\noindent where $B=C_3+Q_3^TD_3^TD_3$ and $B'=A_3C_3^TD_3$.
\end{proof}

\begin{thrm}
Assume that $p=4k+1$. Let $\mathcal{C}$ be self-dual. Then, $\ds{ \sum_{i=0}^2Q_i }$ is invertible.
\end{thrm}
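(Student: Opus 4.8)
The plan is to leverage the immediately preceding theorem, which guarantees that when $\mathcal{C}$ is self-dual there exist $p\times p$ matrices $B,B'$ over $R$ with $Q_3B+Q_3^TB'=I_p$, where $Q_3=\sum_{i=0}^2Q_i$. The key observation that makes the $p=4k+1$ case special is that in this case the building blocks $Q=Q_p(0,1,0)$ and $N=Q_p(0,0,1)$ are \emph{individually} symmetric, i.e.\ $Q=Q^T$ and $N=N^T$ (this is exactly the relation recalled from \cite{gaborit} in the proof of Theorem \ref{T:1}; contrast the $p=4k+3$ case, where instead $Q=N^T$). Since $Q_3=Q_p\!\left(\sum_i a_i,\sum_i b_i,\sum_i c_i\right)=\left(\sum_i a_i\right)I+\left(\sum_i b_i\right)Q+\left(\sum_i c_i\right)N$, taking transposes and using $Q^T=Q$, $N^T=N$ gives $Q_3^T=Q_3$.

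First I would record this symmetry $Q_3^T=Q_3$. Then, substituting it into the identity $Q_3B+Q_3^TB'=I_p$ supplied by the previous theorem, I obtain $Q_3B+Q_3B'=Q_3(B+B')=I_p$, so $Q_3$ admits $B+B'$ as a right inverse. The final step is the standard fact that a square matrix over a commutative ring that has a one-sided inverse is invertible: from $Q_3(B+B')=I_p$ we get $\det(Q_3)\det(B+B')=1$, so $\det(Q_3)$ is a unit of $R$, and hence $Q_3$ is invertible (with inverse $\det(Q_3)^{-1}\operatorname{adj}(Q_3)$, which then necessarily equals $B+B'$).

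There is essentially no hard step here; the only thing to be careful about is the appeal to commutativity of $R$ in the last step (so that determinants behave well and a one-sided inverse is two-sided), and the fact that the symmetry $Q_3^T=Q_3$ genuinely requires the hypothesis $p=4k+1$ rather than $p=4k+3$. I would state these two points explicitly so the reader sees where the prime-class hypothesis is used.
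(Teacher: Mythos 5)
Your proposal is correct and follows essentially the same route as the paper: invoke the preceding theorem to get $Q_3B+Q_3^TB'=I_p$, use $Q=Q^T$ and $N=N^T$ (valid precisely because $p=4k+1$) to conclude $Q_3^T=Q_3$, and hence $Q_3(B+B')=I_p$. Your added remark that a one-sided inverse over a commutative ring is two-sided (via determinants) is a small point the paper leaves implicit, but it is not a different argument.
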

\begin{proof}
By the previous result, \[ \left( \sum_{i=0}^2Q_i \right)B+\left( \sum_{i=0}^2Q_i  \right)^TB'=I_p\]
for some $p \times p$ matrices $B$ and $B'$ over $R$. Clearly, $Q_i=a_iI_p+b_iQ+c_iN$ where $Q=Q_p(0,1,0),$ $N=Q_p(0,0,1)$. Now,
\[
\begin{split}
Q_i^T&=(a_iI_p+b_iQ+c_iN)^T\\
&=a_iI_p+b_iQ^T+c_iN^T\\
&=a_iI_p+b_iQ+c_iN\\
&=Q_i
\end{split}
\]
\noindent since $Q=Q^T,$ $N=N^T.$ Therefore,
\[ \left( \sum_{i=0}^2Q_i \right)B+\left( \sum_{i=0}^2Q_i  \right)^TB'=\left( \sum_{i=0}^2Q_i \right)B+\left( \sum_{i=0}^2Q_i  \right)B'=\left( \sum_{i=0}^2Q_i  \right)(B+B')=I_p\]
and $\ds{\sum_{i=0}^2Q_i}$ is invertible.
\end{proof}

In the next result, we consider a specific example of a commutative Frobenius ring of characteristic 2. For the purpose of the next result, we assume that $R$ is  a local ring with a residue class field that contains $2$ elements.

\begin{thrm}
Assume that $p=4k+3$, $R$ be a local ring with a residue class field that contains $2$ elements and assume that $k$ is even. Let $\mathcal{C}$ be a self-dual code over $R$. Then, $\ds{ \sum_{i=0}^2Q_i }$ is invertible.
\end{thrm}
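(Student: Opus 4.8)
The plan is to leverage the previous theorem exactly as the $p=4k+1$ case did, namely to use the identity
\[
\left(\sum_{i=0}^2 Q_i\right)B + \left(\sum_{i=0}^2 Q_i\right)^T B' = I_p
\]
together with the structure of quadratic residue circulant matrices when $p=4k+3$. Write $Q_3 = \sum_{i=0}^2 Q_i = aI_p + bQ + cN$ where $Q=Q_p(0,1,0)$, $N=Q_p(0,0,1)$, and $a = \sum a_i$, $b = \sum b_i$, $c = \sum c_i$. The key difference from the $4k+1$ case is that now $Q^T = N$ rather than $Q^T = Q$, so $Q_3^T = aI_p + bN + cQ$. Thus $Q_3$ and $Q_3^T$ are no longer equal, but they are both polynomials in $Q$ (using $N = Q^T$ and the relations among $Q,N$), so $Q_3$, $Q_3^T$, $B$, $B'$ all lie in the commutative ring generated by the circulant $Q$; in particular the identity above already shows $Q_3$ and $Q_3^T$ generate the unit ideal, and since everything commutes it suffices to show $Q_3^T = Q_3 U$ for some unit $U$, or more directly that $\det Q_3$ is a unit in $R$.

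First I would reduce to the residue field: since $R$ is local with residue field $\FF_2$, an element of $R$ (or a determinant) is a unit iff its image mod the maximal ideal is nonzero, and a matrix over $R$ is invertible iff its reduction mod the maximal ideal is invertible over $\FF_2$. So it suffices to prove the statement when $R = \FF_2$. Over $\FF_2$ the parameters $a,b,c$ are bits. Next I would compute $Q_3 Q_3^T$ using Theorem~\ref{T:2} (or directly the relations $QQ^T = NN^T = I + kQ + kN$, $QN^T = kQ+(k+1)N$, $NQ^T = (k+1)Q+kN$, noting $QQ^T$ here means $Q_3 Q_3^T$ with $Q_3 = aI+bQ+cN$). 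Expanding, $Q_3 Q_3^T = a^2 I + (\text{coeff})Q + (\text{coeff})N$; the hypothesis that $k$ is even simplifies this modulo $2$ since all terms carrying a factor $k$ vanish, leaving $Q_3 Q_3^T = Q_p(a^2+b^2+c^2,\; ab+ac+bc+bc,\; \ldots)$ — i.e., the $k$-dependent pieces drop out and one gets a clean expression like $Q_3 Q_3^T = (a^2+b^2+c^2)I + (bc)(Q+N) + \cdots$ over $\FF_2$. From the self-duality we also have $Q_3 Q_3^T = A_3 A_3^T$, which was established in the preceding theorem.

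The cleanest route, I expect, is to observe that $Q_3$ is invertible over $\FF_2$ iff $Q_3 Q_3^T$ is (since $Q_3^T$ is also square and $\det Q_3 = \det Q_3^T$), so I would show $Q_3 Q_3^T$ is invertible. Using the unit-ideal identity $Q_3 B + Q_3^T B' = I$, multiply through appropriately to write $I$ as $Q_3 Q_3^T \cdot(\text{something}) + (\text{something})$, mimicking the manipulation in the self-dual theorem where $I_p = Q_3(C_3+Q_3^TD_3^TD_3)+Q_3^T(A_3C_3^TD_3)$ was derived and then $Q_3^T$ was eliminated; the analogous elimination here, combined with $Q_3 Q_3^T = A_3 A_3^T$ and $k$ even, should collapse everything to a statement that $Q_3 Q_3^T$ times an explicit circulant equals $I_p$. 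Alternatively, and perhaps more transparently: since $Q_3, Q_3^T, B, B'$ all lie in $\FF_2[Q]$ and $Q$ satisfies its minimal polynomial, the ring $\FF_2[Q]$ is a quotient of $\FF_2[x]$ and $Q_3$ is a unit there iff $\gcd$ of its representative polynomial with the minimal polynomial of $Q$ is $1$; the identity $Q_3 B + Q_3^T B' = I$ shows $Q_3$ and $Q_3^T$ together are coprime to that minimal polynomial, and the parity condition on $k$ forces $Q_3$ and $Q_3^T$ to have the same image in each local factor of $\FF_2[x]/(\text{min poly})$, hence $Q_3$ alone is a unit.

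The main obstacle will be the last step: over $\FF_2$ with $p=4k+3$ we have $Q_3 \ne Q_3^T$ in general, so unlike the $p=4k+1$ case we cannot simply factor $I_p = Q_3(B+B')$. The role of "$k$ even" is precisely to make $Q_3$ and $Q_3^T$ behave symmetrically enough — concretely, to kill the terms in $Q_3 Q_3^T$ and in the relations $QQ^T$, $QN^T$, $NQ^T$ that distinguish $Q$ from $N$ modulo $2$ — so that the pair $(Q_3, Q_3^T)$ generating the unit ideal upgrades to $Q_3$ itself being invertible. Pinning down exactly how the parity of $k$ enters the ring-theoretic argument (equivalently, checking that $Q_3 \equiv Q_3^T$ modulo every maximal ideal of $\FF_2[Q]$, or that the "ambient summand" obstruction vanishes) is the delicate point; the rest is the determinant/localization reduction and bookkeeping with Theorem~\ref{T:2}.
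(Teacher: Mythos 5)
Your proposal sets up the right frame --- start from the Bezout-type identity $Q_3B+Q_3^TB'=I_p$ of the preceding theorem and reduce modulo the maximal ideal $J$ of the local ring, so that it suffices to work over the residue field $\FF_2$ --- but the heart of the theorem, namely upgrading ``$Q_3$ and $Q_3^T$ generate the unit ideal'' to ``$Q_3$ itself is a unit,'' is exactly the step you leave open (``pinning down exactly how the parity of $k$ enters \dots is the delicate point''), and neither of the two routes you sketch for it works as stated. The reduction to ``show $Q_3Q_3^T$ is invertible'' does not follow from the identity: multiplying $Q_3B+Q_3^TB'=I_p$ by its transpose leaves uncancelled $Q_3^2$ and $(Q_3^T)^2$ terms, and in a commutative ring two elements generating the unit ideal need not have an invertible product. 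The alternative claim that ``$Q_3\equiv Q_3^T$ modulo every maximal ideal of $\FF_2[Q]$'' is false as stated: in the nontrivial factors of the circulant algebra $Q_3^T$ is the Frobenius image of $Q_3$ (one has $N=1+Q=Q^2$ there), which is conjugate to, not equal to, $Q_3$; what is true and what you would actually need is that they vanish simultaneously. Also, your assertion that $B$ and $B'$ lie in $\FF_2[Q]$ is unjustified --- from the earlier theorem they are circulant, i.e.\ live in $\FF_2[x]/(x^p-1)$, but not in the three-dimensional span of $I,Q,N$ --- so the decomposition argument must be run in the full circulant algebra.

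The paper closes this gap by an elementary two-case analysis on the residue field. If $b_3\equiv c_3\pmod J$, then since $Q=N^T$ one gets $Q_3^T\equiv Q_3$, hence $Q_3(B+B')\equiv I_p$ and $Q_3$ is invertible mod $J$. If $b_3\not\equiv c_3$, then $b_3+c_3\equiv 1$, and multiplying the identity on the left by the all-ones vector (each row of $Q_3$ sums to $a_3+(2k+1)(b_3+c_3)\equiv a_3+1$ because $p=4k+3$) and on the right by its transpose shows $a_3+1$ is a unit, so $a_3\equiv 0$ and $Q_3\equiv Q$ or $N \pmod J$; these are invertible precisely because $k$ is even, since $QQ^T=QN=I_p+kQ+kN\equiv I_p$. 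This is where the hypotheses ``residue field of order $2$,'' ``$p=4k+3$,'' and ``$k$ even'' each do concrete work --- none of which your sketch isolates. A corrected version of your Wedderburn-decomposition idea (using that $Q_3^T$ is the Frobenius conjugate of $Q_3$ in each nontrivial local factor, and handling the augmentation factor separately) would also succeed, but as written the proposal does not contain a proof.
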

\begin{proof} Let $Q_3=\ds{\sum_{i=0}^2} Q_i$, $a_3=\ds{\sum_{i=0}^2} a_i$, $b_3=\ds{\sum_{i=0}^2} b_i$ and $c_3=\ds{\sum_{i=0}^2} c_i$. Clearly, $Q_3=a_3I_p+b_3Q+c_3N$ (where $Q=Q_p(0,1,0),$ $N=Q_p(0,0,1)$) and $Q_3B+Q_3^TB'=I_p$ for some matrices $B$ and $B'$. Let $J$ be the unique maximal ideal in $R$. It remains to show that $Q_3 \pmod{J}$ is invertible. If $b_3\equiv c_3\pmod{J}$ then
\[Q_3^T\equiv (a_3I_p+b_3Q+b_3N)^T\equiv a_3I_p+b_3Q^T+b_3N^T\equiv a_3I_p+b_3N+b_3Q\equiv Q_3\pmod{J}\]
since $Q=N^T$. Therefore, \[
Q_3(B+B')\equiv Q_3B+Q_3^TB'\equiv I_p\pmod{J}.
\]
and $Q_3 \pmod{J}$ is invertible.\\

If $b_3\not\equiv c_3\pmod{J}$ then $b_3+ c_3\equiv 1 \pmod{J}$ and
\[
(\underbrace{1,\ldots,1}_p)Q_3^T=
(\underbrace{1,\ldots,1}_p)Q_3
\equiv (\underbrace{a_3+b_3+c_3,\ldots,a_3+b_3+c_3}_p)
\equiv (a_3+1)(\underbrace{1,\ldots,1}_p)\pmod{J}.
\]
Thus
\[
(\underbrace{1,\ldots,1}_p)Q_3B+(\underbrace{1,\ldots,1}_p)Q_3^TB'= (\underbrace{1,\ldots,1}_p)I_p,
\]

\[
(a_3+1)(\underbrace{1,\ldots,1}_p)(B+B')\equiv (a_3+1)(\underbrace{1,\ldots,1}_p)B+(a_3+1)(\underbrace{1,\ldots,1}_p)B'\equiv (\underbrace{1,\ldots,1}_p)\pmod{J}
\]
and
\[
(a_3+1)(\underbrace{1,\ldots,1}_p)(B+B')(\underbrace{1,\ldots,1}_p)^T\equiv (\underbrace{1,\ldots,1}_p)(\underbrace{1,\ldots,1}_p)^T
\equiv 1\pmod{J}.
\]
So $a_3+1$ is invertible  by modulo ideal $J$ and $a_3\equiv 0 \pmod{J}.$
Thus $Q_3\equiv Q\pmod{J}$ or $Q_3\equiv N\pmod{J}$ and $Q^2= N^2= I_p$ since $k$ is even and $Q^2=N^2=I_p+kQ+kN$. Thus $Q_3 \pmod{J}$ is invertible.
\end{proof}

\section{Numerical results}

\noindent In this section, we construct new self-dual codes of length $66$ and $68$ via certain extensions, neighbours and sequences of neighbours. Initially,
we consider the above construction when $p=5$ over $\FF_2+u\FF_2$. We construct an extremal self-dual code (type I) of length $60$ (described in Table \ref{T1}). From
this code, we construct an extremal self-dual code (type I) of length $64$ via an $\FF_2+u\FF_2$ extension (Table \ref{T2}). Next, we find a new self-dual code of length $66$ by an
$\FF_2$ extension of the previously constructed self-dual code of length $64$ (Table \ref{T3}). Finally, we find new self-dual codes of length $68$ via an $\FF_2+u\FF_2$ extension of the previously constructed self-dual code of length $64$ and sequences of neighbours of this code (Tables \ref{T4}, \ref{T5}, \ref{T6}, \ref{T7} and \ref{T8}).  Magma (\cite{magma}) was used to construct all of the codes throughout this section.  \\

\noindent The possible weight enumerators for a self-dual Type I $\left[ 60,30,12\right]$-code is given in \cites{conway,binary} as:
\begin{eqnarray*}
W_{60,1} &=&1+3451y^{12}+24128y^{14}+336081y^{16}+\cdots , \\
W_{60,2} &=&1+\left(2555+64\beta \right) y^{12}+\left( 33600-384\beta
\right) y^{14}+\cdots ,0\leq \beta \leq 10.
\end{eqnarray*}

Extremal singly even self-dual codes with weight enumerator $W_{60,1}$ and $W_{60,2}$ are known (\cite{har60}) for $\beta \in \{0,1,\ldots,8,10\}$.

\bigskip

\noindent To begin with, we construct the following code:

\begin{table}[h!]\caption{Self-dual codes of length $60$ (codes over $\FF_2+u\FF_2$ when $p=5$)}\label{T1}
\begin{center}\scalebox{0.8}{
\begin{tabular}{|c|c|c|c|c|c|c|c|c|}
  \hline
  $\mathcal{C}_i$ & $(a_1,b_1,c_1)$ & $(a_2,b_2,c_2)$ & $(a_3,b_3,c_3)$ & $v_1$ & $v_2$ & $v_3$ & $Aut(\mathcal{C}_i)$ & $\beta$ \\ \hline \hline
  $1$ & $(u,u,u)$ & $(u,u,1)$ & $(1,u,0)$ & $(u,u,u,u,0)$ & $(u,0,0,u,1)$ & $(u,u+1,u+1,u,0)$ & $2^3 \cdot 3 \cdot 5$ & $0$ \\
  \hline
\end{tabular}}
\end{center}
\end{table}

The possible weight enumerators for a self-dual Type I $\left[ 64,32,12\right]$-code are given in \cite{conway,binary} as:
\begin{eqnarray*}
W_{64,1} &=&1+\left( 1312+16\beta \right) y^{12}+\left( 22016-64\beta
\right) y^{14}+\cdots ,14\leq \beta \leq 284, \\
W_{64,2} &=&1+\left( 1312+16\beta \right) y^{12}+\left( 23040-64\beta
\right) y^{14}+\cdots ,0\leq \beta \leq 277.
\end{eqnarray*}%

Extremal singly even self-dual codes with weight enumerators $W_{64,1}$ are known (\cite{anev,YA,HANKEL})

\[
\beta \in \left\{\begin{array}{c}
14, 16, 18, 19,20, 22, 24, 25, 26, 28, 29, 30, 32, 34, \\
35, 36, 38, 39, 44, 46, 49, 53, 54, 58, 59, 60, 64, 74
\end{array}\right\}
\]

and extremal singly even self-dual codes with weight enumerator $W_{64,2}$ are known for

\[
\beta \in
\left\{\begin{array}{c}
0, . . . ,40, 41, 42, 44, 45, 46, 47, 48, 49, 50, 51, 52, 54, 55, 56, 57,  \\
58, 60, 62, 64, 69, 72, 80, 88, 96, 104, 108, 112, 114, 118, 120, 184
\end{array}\right\} \setminus \{31, 39\}.
\]

\bigskip

The weight enumerators of an extremal self-dual code of length $66$ is given in \cite{binary} as follows:
\begin{eqnarray*}
W_{66,1} &=&1+(858+8\beta )y^{12}+(18678-24\beta )y^{14}+\cdots \text{ \
where }0\leq \beta \leq 778, \\
W_{66,2} &=&1+1690y^{12}+7990y^{14}+\cdots\;\;\text{and }  \\
W_{66,3} &=&1+(858+8\beta )y^{12}+(18166-24\beta )y^{14}+\cdots
\text{ where }14\leq \beta \leq 756.
\end{eqnarray*}%

Together with the codes recently obtained in \cite{anev} and the ones from \cite{Karadeniz2}, \cite{Kaya} and \cite{QR}, extremal singly even self-dual codes with weight enumerator $W_{66,1}$ are known for
$$\beta \in \{0,1,2,3,5,6, \dots, 94, 100, 101, 115\}$$
and extremal singly even self-dual codes with weight enumerator $W_{66,3}$ are known for
$$\beta  \in \{22,23, \dots, 92\} \setminus \{89, 91\}. $$

\bigskip

\noindent The known weight enumerators of a self-dual $[68,34,12]_{I}$-code are as follows (\cite{buyuklieva,harada}):
\begin{eqnarray*} W_{68,1}&=&1+(442+4\beta)y^{12}+(10864-8\beta)y^{14}+ \dots \\
W_{68,2}&=&1+(442+4\beta)y^{12}+(14960-8\beta-256\gamma)y^{14} +\dots \end{eqnarray*}
\noindent where $0\leq \gamma \leq 9$. Codes have been obtained for $W_{68,2}$ when (\cite{GNP})
\[
\begin{split}
\gamma &=2,\ \beta \in \{2m|m=29,\dots ,100,103,104\}; \;\text{or} \; \beta \in\{2m+1|m=32,\dots ,81,84,85,86\}; \\
\gamma &=3,\ \beta \in \{2m|m=39,\dots ,92,94,95,97,98,101,102\};\; \text{or}\;\\
& \qquad \beta \in \{2m+1|m=38,40,43,\dots ,77,79,80,81,83,87,88,89,96\}; \\
\gamma &=4,\ \beta \in \{2m|m=43,46,\dots ,58,60,\dots ,93,97,98,100\};\text{or} \\
& \qquad \beta \in \{2m+1|m=48,\dots ,55,57,58,60,61,62,64,68,\ldots,72,74,78,79,80,83,84,85,89,95\}; \\
\gamma &=5\text{ with }\beta \in \left\{\text{101,105,109,111,$...$,182,187,189,191,192,193,195,198,200,201,202,211,213}\right\}\\
\gamma  &=6,\ \beta \in \left\{ 131,133,137,\ldots,202,203,206,207,210 \right\} ;\\
\gamma  &=7,\ \beta \in \left\{ 7m\left\vert m=14,\ldots
22,28,\ldots,39,42\right. \right\} \text{ or }\beta \in \left\{ 155,\ldots,199 \right\} ;\\
\gamma  &=8,\ \beta \in \left\{ 180,\ldots,221 \right\} ;\\
\gamma  &=9,\ \beta \in \left\{186,\ldots,226,228,230 \right\} ;\\
\end{split}
\]

\noindent Applying Theorem \ref{extension} over $\FF_2$ and $\FF_2+u\FF_2$ (to the code constructed in Table \ref{T1}), we
construct self-dual codes of lengths $64$, $66$ and $68$ (Tables \ref{T2}, \ref{T3} and \ref{T4}). We replace $3$ with $1+u$
to save space.\\

\begin{table}[h!]
\caption{Self-dual codes of length $64$ from $\FF_2+u\FF_2$ extensions of codes from Table \ref{T2} }\label{T2}
\begin{center}\scalebox{0.8}{
\begin{tabular}{|c|c|c|c|c|c|c|}
  \hline
  $\mathcal{D}_i$ & $\mathcal{C}_i$ & $c$ & $X$ & $W_{64,i}$ & $\beta$ & $Aut(\mathcal{D}_i)$  \\ \hline \hline
  $1$ & $1$ & $3$ & $(uu0u3030u330301013u1u1100u1311)$ & $1$ & $14$ & $2^2$  \\ \hline
  \hline
\end{tabular}}
\end{center}
\end{table}

\begin{table}[h!]
\caption{Self-dual codes of length $66$ from $\FF_2$ extensions of codes from Table \ref{T3} where $x_i=0$ for $1 \leq i \leq 33$. }\label{T3}
\begin{center}\scalebox{0.8}{
\begin{tabular}{|c|c|c|c|c|c|c|}
  \hline
  $\mathcal{E}_i$ & $\mathcal{D}_i$ & $c$ & $X$ & $W_{66,i}$ & $\beta$ & $Aut(\mathcal{E}_i)$  \\ \hline \hline
  $1$ & $1$ & $1$ & $(00111100110110011001111001101011)$ & $3$ & $\mathbf{21}$ & $1$  \\ \hline
  \hline
\end{tabular}}
\end{center}
\end{table}

\begin{table}[h!]
\caption{Self-dual codes of length $68$ ($W_{68,2}$) from $\FF_2+u\FF_2$ extensions of codes from Table \ref{T2} }\label{T4}
\begin{center}\scalebox{0.8}{
\begin{tabular}{|c|c|c|c|c|c|c|}
  \hline
  $\mathcal{F}_i$ & $\mathcal{D}_i$ & $c$ & $X$ & $\alpha$ & $\beta$ & $Aut(\mathcal{F}_i)$  \\ \hline \hline
  $1$ & $1$ & $1+u$ & $(0uu01u130130000031100u1u331030u0)$ & $2$ & $67$ & $2$  \\ \hline
  \hline
\end{tabular}}
\end{center}
\end{table}

\noindent Let $\mathcal{N}_{(0)}=\mathcal{F}_1$. Applying the $k^{th}$-range neighbour formula (in section 2), we obtain:

\begin{table}[h]\caption{$i^{th}$ neighbour of  $\mathcal{N}_{(0)}$}\label{neighbors1}\label{T5}
\begin{center}\scalebox{0.8}{
\begin{tabular}{|c|c|ccc|}
\hline
$i$ & $\mathcal{N}_{(i+1)}$   & $x_i$  & $\gamma$ & $\beta$  \\ \hline \hline
$0$ & $\mathcal{N}_{(1)}$ &  $(1010001001111100101010100100000001)$  & $3$ & $103$   \\ \hline
$1$ & $\mathcal{N}_{(2)}$ &  $(1001010100001111001111100011111110)$  & $4$ & $124$   \\ \hline
$2$ & $\mathcal{N}_{(3)}$ &  $(1111101011111101111010000110110111)$  & $5$ & $134$   \\ \hline
$3$ & $\mathcal{N}_{(4)}$ &  $(1010100011100001100011000110010010)$  & $6$ & $149$   \\ \hline
$4$ & $\mathcal{N}_{(5)}$ &  $(0010101000110001011010101011010110)$  & $6$ & $133$   \\ \hline
$5$ & $\mathcal{N}_{(6)}$ &  $(0000001001000111101111000000101110)$  & $\textbf{7}$ & $\textbf{145}$   \\ \hline
$6$ & $\mathcal{N}_{(7)}$ &  $(1101111101111111001111101010111011)$  & $\textbf{8}$ & $\textbf{161}$   \\ \hline
$7$ & $\mathcal{N}_{(8)}$ &  $(1001000001100010000111100000110010)$  & $\textbf{8}$ & $\textbf{153}$   \\ \hline
$8$ & $\mathcal{N}_{(9)}$ &  $(0010111011010011100001110000101111)$  & $\textbf{9}$ & $\textbf{177}$   \\ \hline
\end{tabular}}
\end{center}
\end{table}

\noindent We shall now separately consider the neighbours of $\mathcal{N}_{(7)}$, $\mathcal{N}_{(8)}$ and $\mathcal{N}_{(9)}$.\\

\bigskip

\begin{table}[h]\caption{New codes of length 68 as neighbours}\label{neighbors}\label{T6}
\begin{center}\scalebox{0.7}{
\begin{tabular}{|c|c|ccc|c|c|ccc|}
\hline
$\mathcal{N}_{(i)}$ & $\mathcal{M}_{i}$ & $(x_{35},x_{36},...,x_{68})$ & $\gamma$ & $\beta$  &
$\mathcal{N}_{(i)}$ & $\mathcal{M}_{i}$ & $(x_{35},x_{36},...,x_{68})$ & $\gamma$ & $\beta$  \\ \hline
$7$ & $$ & $(1001110100001011001000010110001111)$  & $\textbf{6}$ & $\textbf{135}$   &
$7$ & $$ & $(0110101110011000110111101110111101)$  & $\textbf{7}$ & $\textbf{142}$   \\ \hline
$7$ & $$ & $(1010101111010000011101101110100001)$  & $\textbf{7}$ & $\textbf{144}$   &
$7$ & $$ & $(1010000001001100100011001110010110)$  & $\textbf{7}$ & $\textbf{148}$   \\ \hline
$7$ & $$ & $(1100000100000100000111110100011000)$  & $\textbf{7}$ & $\textbf{150}$   &
$7$ & $$ & $(0000001101101010011100110000101010)$  & $\textbf{7}$ & $\textbf{152}$   \\ \hline
$7$ & $$ & $(1100001010100000101010001010000011)$  & $\textbf{8}$ & $\textbf{156}$   &
$7$ & $$ & $(0111011101011111010001111101111101)$  & $\textbf{8}$ & $\textbf{157}$   \\ \hline
$7$ & $$ & $(1001110111011110111110110100110111)$  & $\textbf{8}$ & $\textbf{158}$   &
$7$ & $$ & $(1100111101110001001101011111111010)$  & $\textbf{8}$ & $\textbf{159}$   \\ \hline
$7$ & $$ & $(0111111111111101111011010001001110)$  & $\textbf{8}$ & $\textbf{160}$   &
$7$ & $$ & $(0000010100011010000011100000110110)$  & $\textbf{8}$ & $\textbf{162}$   \\ \hline
$7$ & $$ & $(1011100110110111110001111010111001)$  & $\textbf{8}$ & $\textbf{163}$   &
$7$ & $$ & $(1000001100011101010001001011100111)$  & $\textbf{8}$ & $\textbf{164}$   \\ \hline
$7$ & $$ & $(0101101010111111100000010110011010)$  & $\textbf{8}$ & $\textbf{165}$   &
$7$ & $$ & $(1100111110111111011000111101101101)$  & $\textbf{8}$ & $\textbf{166}$   \\ \hline
$7$ & $$ & $(0110110011000101101101010000111011)$  & $\textbf{8}$ & $\textbf{167}$   &
$7$ & $$ & $(1110001001011001000010101101101111)$  & $\textbf{8}$ & $\textbf{168}$   \\ \hline
$7$ & $$ & $(0000110001100111100110010110000100)$  & $\textbf{8}$ & $\textbf{169}$   &
$7$ & $$ & $(1101100001010100111111000110010000)$  & $\textbf{8}$ & $\textbf{170}$   \\ \hline
$7$ & $$ & $(0100111101011101000000001111011110)$  & $\textbf{8}$ & $\textbf{171}$   &
$7$ & $$ & $(1101011100101001111000001010101101)$  & $\textbf{8}$ & $\textbf{172}$   \\ \hline
$7$ & $$ & $(0011011111010111110100010011001110)$  & $\textbf{8}$ & $\textbf{173}$   &
$7$ & $$ & $(1000000111111110110000111001110100)$  & $\textbf{8}$ & $\textbf{174}$   \\ \hline
$7$ & $$ & $(1000111010001101101000001010100111)$  & $\textbf{8}$ & $\textbf{175}$   &
$7$ & $$ & $(1011011001110100101000011000010011)$  & $\textbf{8}$ & $\textbf{176}$   \\ \hline
$7$ & $$ & $(1101110100011011100010110101010001)$  & $\textbf{8}$ & $\textbf{177}$   &
$7$ & $$ & $(0000001001111010000101101011000101)$  & $\textbf{8}$ & $\textbf{178}$   \\ \hline
$7$ & $$ & $(1010110111110111000100101010000110)$  & $\textbf{8}$ & $\textbf{179}$   &
    &    &                                         &              &                   \\ \hline
\end{tabular}}
\end{center}
\end{table}
\vspace{-0.1in}

\begin{table}[h]\caption{New codes of length 68 as neighbours}\label{neighbors}\label{T7}
\begin{center}\scalebox{0.7}{
\begin{tabular}{|c|c|ccc|c|c|ccc|}
\hline
$\mathcal{N}_{(i)}$ & $\mathcal{M}_{i}$ & $(x_{35},x_{36},...,x_{68})$  & $\gamma$ & $\beta$  &
$\mathcal{N}_{(i)}$ & $\mathcal{M}_{i}$ & $(x_{35},x_{36},...,x_{68})$  & $\gamma$ & $\beta$  \\ \hline
$8$ & $$ & $(1011100000000100011001011001010000)$ & $\textbf{6}$ & $\textbf{134}$   &
$8$ & $$ & $(0100011011001110010010110000110000)$  & $\textbf{7}$ & $\textbf{146}$   \\ \hline
$8$ & $$ & $(1000010001101000000110110001001100)$  & $\textbf{8}$ & $\textbf{154}$   &
$8$ & $$ & $(0100010111101000010111100101011101)$  & $\textbf{8}$ & $\textbf{155}$   \\ \hline
\end{tabular}}
\end{center}
\end{table}

\begin{table}[h]\caption{New codes of length 68 as neighbours}\label{neighbors}\label{T8}
\begin{center}\scalebox{0.7}{
\begin{tabular}{|c|c|ccc|c|c|ccc|}
\hline
$\mathcal{N}_{(i)}$ & $\mathcal{M}_{i}$ & $(x_{35},x_{36},...,x_{68})$  & $\gamma$ & $\beta$  &
$\mathcal{N}_{(i)}$ & $\mathcal{M}_{i}$ & $(x_{35},x_{36},...,x_{68})$  & $\gamma$ & $\beta$ \\ \hline
$9$ & $$ & $(1011000010111001011111100101111111)$  & $\textbf{9}$ & $\textbf{169}$   &
$9$ & $$ & $(0111011011011100111010101011101011)$  & $\textbf{9}$ & $\textbf{171}$   \\ \hline
$9$ & $$ & $(1010111001101000111110101111110011)$  & $\textbf{9}$ & $\textbf{173}$   &
$9$ & $$ & $(1000100101111111111101111101000011)$  & $\textbf{9}$ & $\textbf{174}$   \\ \hline
$9$ & $$ & $(1001010100111110011111000101100001)$  & $\textbf{9}$ & $\textbf{175}$   &
$9$ & $$ & $(1100110001000010011000011000010100)$  & $\textbf{9}$ & $\textbf{176}$   \\ \hline
$9$ & $$ & $(0000111100010110110000010011101110)$  & $\textbf{9}$ & $\textbf{178}$   &
$9$ & $$ & $(0000111111001110111000111100010001)$  & $\textbf{9}$ & $\textbf{179}$   \\ \hline
$9$ & $$ & $(0010110110000001011001111001010110)$  & $\textbf{9}$ & $\textbf{180}$   &
$9$ & $$ & $(1101100001101011010000110010101111)$  & $\textbf{9}$ & $\textbf{181}$   \\ \hline
$9$ & $$ & $(1000010010001101110110100111100100)$  & $\textbf{9}$ & $\textbf{182}$   &
$9$ & $$ & $(1111010101110110001110101110011011)$  & $\textbf{9}$ & $\textbf{183}$   \\ \hline
$9$ & $$ & $(0101001111100011111010011011111011)$  & $\textbf{9}$ & $\textbf{184}$   &
$9$ & $$ & $(1011000000001100111100001100011001)$  & $\textbf{9}$ & $\textbf{185}$   \\ \hline
\end{tabular}}
\end{center}
\end{table}

\section{Conclusion}

In this work, we introduced a new construction that involved both block circulant matrices and block quadratic residue circulant matrices. We demonstrated the relevance of this new construction by constructing many binary self-dual codes, including new self-dual codes of length  $66$ and $68$.

\begin{itemize}

\item \textbf{Codes of length $66$:} We were able to construct the following extremal binary self-dual codes with new weight enumerators in $W_{66,3}$:
\[
\beta=\{21\}.
\]

\item \textbf{Codes of length $68$:} We were able to construct the following extremal binary self-dual codes with new weight enumerators in $W_{68,2}$:

\begin{equation*}
\begin{split}
(\gamma =6,& \quad \beta =\{134,135\}). \\
(\gamma =7,& \quad \beta =\{142,144,145,146,148,150,152\}). \\
(\gamma =8,& \quad \beta =\{153,154,155,156,157,158,159,160,161,162,163,164,165,166,167,\\
& \quad \qquad \;\;168,169,170,171,172,173,174,175,176,177,178,179\}).\\
(\gamma =9,& \quad \beta =\{169,171,173,174,175,176,177,178,179,180,181,182,183,184,185\}). \\
\end{split}%
\end{equation*}

\end{itemize}

In this paper, we considered $3 \times 3$ blocks of both block circulant matrices and block quadratic residue circulant matrices. A possible direction in the future could be to consider
$n \times n$ blocks of both block circulant matrices and block quadratic residue circulant matrices.

\begin{bibdiv}
  \begin{biblist}

\bib{anev}{article}{
   author={D. Anev},
   author={M. Harada},
   author={N. Yankov},
   title={New extremal singly even self-dual codes of lengths 64 and 66},
   journal={J. Algebra Comb. Discrete Struct. Appl.},
   volume={5},
   date={2018},
   number={3},
   pages={143--151}
}
\bib{magma}{article}{
   author={W. Bosma},
   author={J. Cannon},
   author={C. Playoust},
   title={The Magma algebra system. I. The user language},
   note={Computational algebra and number theory (London, 1993)},
   journal={J. Symbolic Comput.},
   volume={24},
   date={1997},
   number={3-4},
   pages={235--265},
}
\bib{buyuklieva}{article}{
   author={S. Buyuklieva},
   author={I. Boukliev},
   title={Extremal self-dual codes with an automorphism of order $2$},
   journal={IEEE Trans. Inform. Theory},
   volume={44},
   date={1998},
   number={1},
   pages={323--328},
}
\bib{conway}{article}{
   author={J. H. Conway},
   author={N.J.A, Sloane},
   title={A new upper bound on the minimal distance of self-dual codes},
   journal={IEEE Trans. Inform. Theory},
   volume={36},
   date={1990},
   number={6},
   pages={1319--1333},
  }
\bib{binary}{article}{
   author={S.T. Dougherty},
   author={T.A. Gulliver},
   author={M. Harada},
   title={Extremal binary self-dual codes},
   journal={IEEE Trans. Inform. Theory},
   volume={43},
   date={1997},
   number={6},
   pages={2036--2047},
}

\bib{gaborit}{article}{
   author={P. Gaborit},
   title={Quadratic double circulant codes over fields},
   journal={J. Combin. Theory Ser. A},
   volume={97},
   date={2002},
   number={1},
   pages={85--107},
}

\bib{QR}{article}{
   author={J. Gildea},
   author={H. Hamilton},
   author={A. Kaya},
   author={B. Yildiz},
   title={\textquotedblleft Binary generator matrices for extremal binary self-dual
codes of lengths $64,66$ and $68$"},
    journal={to appear in Inform. Process. Lett. (https://doi.org/10.1016/j.ipl.2020.105927)},
}

\bib{GNP}{article}{
   author={ J. Gildea},
   author={A. Kaya},
   author={A. Korban},
   author={B. Yildiz},
   title={New Extremal binary self-dual codes of length 68 from a novel approach to neighbors},
   journal={https://arxiv.org/abs/2002.10030},
}

\bib{HANKEL}{article}{
   author={J. Gildea},
   author={A. Kaya},
   author={A. Korban},
   author={B. Yildiz},
   title={Constructing Self-Dual Codes from Group Rings and Reverse Circulant Matrices},
   journal={to appear in Adv. Math. Commun.},
}

\bib{har60}{article}{
   author={M. Harada},
   title={Binary extremal self-dual codes of length 60 and related codes},
   journal={Des. Codes Cryptogr.},
   volume={86},
   date={2018},
   number={5},
   pages={1085--1094},

}

\bib{harada}{article}{
   author={M. Harada},
   author={A. Munemasa},
   title={Some restrictions on weight enumerators of singly even self-dual
   codes},
   journal={IEEE Trans. Inform. Theory},
   volume={52},
   date={2006},
   number={3},
   pages={1266--1269},
}

\bib{Karadeniz2}{article}{
   author={ S. Karadeniz},
   author={B. Yildiz},
   title={New extremal binary self-dual codes of length 66 as extensions of
   self-dual codes over $R_k$},
   journal={J. Franklin Inst.},
   volume={350},
   date={2013},
   number={8},
   pages={1963--1973},
}

\bib{Kaya}{article}{
   author={A. Kaya},
   title={New extremal binary self-dual codes of lengths 64 and 66 from
   $R_2$-lifts},
   journal={Finite Fields Appl.},
   volume={46},
   date={2017},
   pages={271--279},
}

\bib{Kim}{article}{
   author={Kim, J. L.},
   title={New extremal self-dual codes of lengths 36, 38, and 58},
   journal={IEEE Trans. Inform. Theory},
   volume={47},
   date={2001},
   number={1},
   pages={386--393},
}

\bib{Rains}{article}{
   author={E.M. Rains},
   title={Shadow bounds for self-dual codes},
   journal={IEEE Trans. Inform. Theory},
   volume={44},
   date={1998},
   number={1},
   pages={134--139},
}

\bib{YA}{article}{
   author={N. Yankov},
   author={D. Anev},
   journal={Appl. Algebra Engrg. Comm. Comput.},
   title={On the self-dual codes with an automorphism of order 5},
   date={2019},
   number={https://doi.org/10.1007/s00200-019-00403-0},
}

  \end{biblist}
\end{bibdiv}

\end{document}